\numberwithin{equation}{section}
\theoremstyle{plain}
\newtheorem{theorem}{Theorem}
\newtheorem{lemma}[theorem]{Lemma}
\newtheorem{proposition}[theorem]{Proposition}
\newtheorem{corollary}[theorem]{Corollary}
\newtheorem{definition}[theorem]{Definition}
\newtheorem{remark}[theorem]{Remark}
\newcommand{\N}{\mathsf{N}}
\newcommand{\E}{\mathbb{E}}
\newcommand{\G}{\mathcal{G}}
\newcommand{\R}{\mathbb{R}}
\newcommand{\CC}{\mathcal{C}}
\newcommand{\rd}{\mathrm{d}}
\DeclareMathOperator{\Tr}{Tr}
\newcommand{\bC}{\mathsf{C}}
\newcommand{\Z}{\mathbb{Z}}
\newcommand{\supp}{{\rm supp\ }}
\newcommand{\KL}{{\rm KL}}
\newcommand{\mm}{\mathfrak{m}}
\newcommand{\mC}{\mathfrak{C}}
\newcommand{\mP}{{\mathcal P}}
\newcommand{\mF}{{\mathcal F}}
\newcommand{\mI}{{\mathcal I}}
\newcommand{\Tp}{T_\rho\mP_+}
\newcommand{\cTp}{T_\rho^*\mP_+}
\newcommand{\Ons}{V_{\rho,\CC}}
\newcommand{\Onsop}{\Delta_{\rho,\CC}}
\newcommand{\nc}{\normalcolor}
\begin{document}

\begin{frontmatter}
    \title{Interacting Langevin Diffusions: Gradient Structure And Ensemble Kalman Sampler}
    \runtitle{Ensemble Kalman Sampler}
    \begin{aug}
        \author{\fnms{Alfredo} \snm{Garbuno-Inigo}\ead[label=e1]{agarbuno@caltech.edu}},
        \author{\fnms{Franca} \snm{Hoffmann}\ead[label=e2]{fkoh@caltech.edu}},
        \author{\fnms{Wuchen} \snm{Li}\ead[label=e3]{wcli@math.ucla.edu}}
        \and
        \author{\fnms{Andrew M.} \snm{Stuart}\ead[label=e4]{astuart@caltech.edu}}

        \runauthor{Garbuno-Inigo, Hoffmann, Li \& Stuart}

        \address[cal]{Department of Computing and Mathematical Sciences, Caltech, Pasadena, CA.
        \printead*{e1,e2,e4}
        }

        \address[ucla]{Department of Mathematics, UCLA, Los Angeles, CA.
        \printead*{e3}
        }
    \end{aug}

    \maketitle

    \begin{abstract}
      Solving inverse problems without the use of derivatives or adjoints of the
      forward model is highly desirable in many applications arising in science and
      engineering. In this paper we propose a new version of such a methodology, a
      framework for its analysis, and numerical evidence of the practicality of the
      method proposed. Our starting point is an ensemble of over-damped Langevin
      diffusions which interact through a single preconditioner computed as the
      empirical ensemble covariance. We demonstrate that the nonlinear Fokker-Planck
      equation arising from the mean-field limit of the associated stochastic
      differential equation (SDE) has a novel gradient flow structure, built on the
      Wasserstein metric and the covariance matrix of the noisy flow. Using this
      structure, we investigate large time properties of the Fokker-Planck equation,
      showing that its invariant measure coincides with that of a single Langevin
      diffusion, and demonstrating exponential convergence to the invariant measure in
      a number of settings. We introduce a new noisy variant on ensemble Kalman
      inversion (EKI) algorithms found from the original SDE by replacing exact
      gradients with ensemble differences; this defines the ensemble Kalman sampler
      (EKS). Numerical results are presented which demonstrate its efficacy as a
      derivative-free approximate sampler for the Bayesian posterior arising from
      inverse problems.
    \end{abstract}
\end{frontmatter}
\noindent\textbf{Keywords:}~{Ensemble Kalman Inversion; Kalman--Wasserstein metric; Gradient flow; Mean-field Fokker-Planck equation.}
\section{Problem Setting}

\subsection{Background}

Consider the inverse problem of finding $u \in \R^d$ from $y \in \R^K$ where
\begin{align}
\label{eq:IP}
    y = \G(u) + \eta,
\end{align}
$\G:\R^d\to\R^K$ is a known non-linear forward operator
and $\eta$ is the unknown observational
noise. Although $\eta$ itself is unknown, we assume that
it is drawn from a known
probability distribution; to be concrete we assume that this
distribution is a centered Gaussian:
$\eta \sim \N(0,\Gamma)$ for a known covariance  matrix $\Gamma\in\R^{K\times
K}$. In summary, the objective of the
inverse problem is to find information about
the truth $u^\dagger$ underlying the data $y$; the forward map $\G$, the covariance
$\Gamma$ and the data $y$ are all viewed as given.

A key role in any optimization scheme to solve \eqref{eq:IP} is played by
$\ell(y,\G(u))$ for some loss function $\ell: \R^K \times \R^K \mapsto \R.$
For additive Gaussian noise the natural loss function
is \footnote{For any positive-definite symmetric matrix $A$ we define $\langle
a,a' \rangle_{A}=\langle a,A^{-1}a' \rangle= \langle
A^{-\frac12}a,A^{-\frac12}a' \rangle$ and $\|a\|_{A}=\|A^{-\frac12}a\|.$}
$$\ell(y,y')=\frac12\|y-y'\|_{\Gamma}^2,$$ leading to the nonlinear
least squares functional
\begin{equation}\label{eq:phi}
    \Phi(u)=\frac{1}{2}\|y-\G(u)\|^2_{\Gamma}.
\end{equation}

In the Bayesian approach to inversion \citep{kaipio2006statistical} we place a prior distribution on the unknown $u$, with Lebesgue density $\pi_0(u)$, then the posterior
density on $u|y$, denoted $\pi(u)$, is given by
\begin{equation}
\label{eq:post}
 \pi(u) \propto \exp\bigl(-\Phi(u)\bigr)\pi_0(u).
\end{equation}
In this paper we will concentrate on the case where the prior is a centred Gaussian $\N(0,\Gamma_0)$, assuming throughout that $\Gamma_0$ is strictly positive-definite and hence invertible. If we define
\begin{equation}
\label{eq:R}
R(u)=\frac12 \|u\|_{\Gamma_0}^2
\end{equation}
and
\begin{equation}
    \label{eq:phir}
\Phi_R(u)=\Phi(u)+R(u),
\end{equation}
then
\begin{equation}
\label{eq:post2}
 \pi(u) \propto \exp\bigl(-\Phi_R(u)\bigr).
\end{equation}
Note that the regularization $R(\cdot)$
is of Tikhonov-Phillips form \citep{engl1996regularization}.

Our focus throughout is on using interacting particle systems to approximate
Langvein-type stochastic dynamical systems to sample from \eqref{eq:post2}.
Ensemble Kalman inversion (EKI), and variants of it, will be central in our
approach because these methods play an important role in large-scale scientific
and engineering applications in which it is undesirable, or impossible, to
compute derivatives and adjoints defined by the forward map $\G$. Our goal is to
introduce a noisy version of EKI which may be used to generate approximate
samples from \eqref{eq:post2} based only on evaluations of $\G(u)$, to exemplify
its potential use and to provide a framework for its analysis. We refer to the
new methodology as \emph{ ensemble Kalman sampling}  (EKS).

\subsection{Literature Review}

The overdamped Langevin equation provides the simplest example of a reversible
diffusion process with the property that it is invariant with respect to
\eqref{eq:post2} \citep{pavliotis2014stochastic}. It provides a conceptual
starting point for a range of algorithms designed to draw approximate samples
from the density \eqref{eq:post2}. This idea may be generalized to
non-reversible diffusions such as those with state-dependent noise
\citep{duncan2016variance}, those which are higher order in time
\citep{ottobre2011asymptotic} and combinations of the two
\citep{girolami2011riemann}. In the case of higher order dynamics the desired
target measure is found by marginalization. There are also a range of methods,
often going under the collective names Nos\'e-Hoover-Poincar\'e, which identify
the target measure as the marginal of an invariant measure induced by (ideally)
chaotic and mixing deterministic dynamics \citep{leimkuhler2016molecular} or a
mixture between chaotic and stochastic dynamics \citep{leimkuhler2009gentle}.
Furthermore, the Langevin equation may be shown to govern the behaviour of a
wide range of Monte Carlo Markov Chain (MCMC) methods; this work was initiated
in the seminal paper \citep{roberts1997} and has given rise to many related works
\citep{roberts1998optimal,roberts2001optimal,bedard2007weak,bedard2008optimal,bedard2008optimalb,mattingly2012diffusion,pillai2014noisy,ottobre2011asymptotic};
for a recent overview see \citep{yang2019optimal}.

In this paper we will introduce an interacting particle system generalization of
the overdamped Langevin equation, and use ideas from ensemble Kalman methodology
to generate approximate solutions of the resulting stochastic flow, and hence
approximate samples of \eqref{eq:post2}, without computing dervatives of the log
likelihood.  The ensemble Kalman filter was originally introduced as a method
for state estimation, and later extended as the EKI to the solution of general
inverse problems and parameter estimation problems. For a historical development
of the subject, the reader may consult the books
\citep{evensen2009data,oliver2008inverse,majda2012filtering,law2015data,reich2015probabilistic}
and the recent review \citep{carrassi2018data}. The Kalman filter itself was
derived for linear Gaussian state estimation problems \citep{kalman1960new,KB}.
In the linear setting, ensemble Kalman based methods may be viewed as Monte
Carlo approximations of the Kalman filter; in the nonlinear case ensemble Kalman
based methods do not converge to the filtering or posterior distribution in the
large particle limit \citep{ernst2015analysis}. Related interacting particle
based methodologies of current interest include Stein variational gradient
descent \citep{lu2018scaling, Liu2016,detommaso2018stein} and the Fokker-Planck
particle dynamics of Reich \citep{reich2018data,pathiraja2019discrete}, both of
which map an arbitrary initial measure into the desired posterior measure over
an infinite time horizon $s \in [0,\infty)$. A related approach is to introduce
an artificial time $s \in [0,1]$ and a homotopy between the prior at time $s=0$
and the posterior measure at time $s=1$ and write an evolution equation for the
measures \citep{daum2011particle,reich2011dynamical,el2012bayesian,LMMR}; this
evolution equation can be approximated by particle methods. There are also other
approaches in which optimal transport is used to evolve a sequence of particles
through a transportation map \citep{reich2013,Marzouk2016} to solve probabilistic
state estimation or inversion problems as well as interacting particle systems
designed to reproduce the solution of the filtering problem
\citep{crisan2010approximate,yang2013feedback}. The paper \citep{del2018stability}
studies ensemble Kalman filters from the perspective of the mean-field process,
and propagation of chaos. Also of interest are the consensus-based optimization
techniques given a rigorous setting in \citep{carrillo2018analytical}.

The idea of using interacting particle systems derived from coupled
Langevin-type equations is introduced within the context of MCMC methods in
\citep{leimkuhler2018ensemble}; these methods require computation of derivatives
of the log likelihood. In work \citep{Duncan}, concurrent with this paper, the
interacting Langevin diffusions \eqref{eq:eki-basic-iter},\eqref{eq:ec} below
are studied, the goal being to demonstrate that the pre-conditioning removes
slow relaxation rates when they are present in the standard Langevin equation
\eqref{eq:ble}; such a result is proven in the case where the potential $\Phi_R$
is quadratic and the posterior measure of interest is Gaussian. A key concept
underlying both \citep{leimkuhler2018ensemble} and \citep{Duncan} is the idea of
finding algorithms which converge to equilibrium at rates independent of the
conditioning of the Hessian of the log posterior, an idea introduced in the
affine invariant samplers of \cite{goodman2010ensemble}.  \nc

Continuous-time limits of ensemble Kalman filters for state estimation were
first introduced and studied systematically in the papers
\citep{bergemann2012ensemble,reich2011dynamical,bergemann2010localization,bergemann2010mollified};
the papers \citep{bergemann2010localization,bergemann2010mollified} studied the
``analysis'' step of filtering (using Bayes theorem to incorporate data) through
introduction of an artificial continuous time; the papers
\citep{bergemann2012ensemble,reich2011dynamical} developed a seamless framework
that integrated the true time for state evolution and the artificial time for
incorporation of data into one. The resulting methodology has been studied in a
number of subsequent papers, see
\citep{del2017stability,del2018stability,de2018long,taghvaei2018kalman} and the
references therein. A slightly different seamless continuous time formulation
was introduced, and analyzed, a few years later in
\citep{law2015data,kelly2014well}. Continuous time limits of ensemble methods for
solving inverse problems were introduced and analyzed in the paper
\citep{schillings2017analysis}; in fact the work in the papers
\citep{bergemann2010localization,bergemann2010mollified} can be re-interpreted in
the context of ensemble methods for inversion and also results in similar, but
slightly different continuous time limits. The idea of iterating ensemble
methods to solve inverse problems originated in the papers
\citep{chen2012ensemble,emerick2013investigation}, which were focussed on
applications in oil-reservoir applications; the paper
\citep{iglesias2013ensemble} describes, and demonstrated the promise of, the
methods introduced in those papers for quite general inverse problems. The
specific continuous time version of the methodology, which we refer to as EKI in
this paper, was identified in \citep{schillings2017analysis}.

There has been significant activity devoted to the gradient flow structure
associated with the Kalman filter itself.  A well-known result is that for a
constant state process, Kalman filtering is the gradient flow with respect to
the Fisher-Rao metric \citep{LMMR, HG,Ollivier2017_online}. It is worth noting
that the Fisher-Rao metric connects to the covariance matrix, see details in
\citep{IG2}. On the other hand, optimal transport \citep{vil2008} demonstrates the
importance of the $L^2$-Wasserstein metric in probability density space. The
space of densities equipped with this metric introduces an infinite-dimensional
Riemannian manifold, called the density manifold \citep{Lafferty,otto2001,LiG}.
Solutions to the Fokker-Planck equation are gradient flows of the relative
entropy in the density manifold \citep{otto2001,JKO}. Designing time-stepping
methods which preserve gradient structure is also of current interest: see
\citep{pathiraja2019discrete} and, within the context of Wasserstein gradient
flows, \citep{LiM,tong2018wasserstein,LiM2}. The subject of discrete gradients
for time-integration of gradient and Hamiltonian systems is developed in
\citep{humphries1994runge,gonzalez1996time,mclachlan1999geometric,hairer2013energy}.
Furthermore, the papers \citep{schillings2017analysis,schillings2018convergence}
study continuous time limits of EKI algorithms and, in the case of linear
inverse problems, exhibit a gradient flow structure for the standard least
squares loss function, preconditioned by the empirical covariance of the
particles; a related structure was highlighted in
\citep{bergemann2010localization}. The paper \citep{herty2018kinetic}, which has
inspired aspects of our work, builds on the paper \citep{schillings2017analysis}
to study the same problem in the mean-field limit; their mean-field perspective
brings considerable insight which we build upon in this paper. Recent work
\citep{ding2019mean} has studied the approach to the mean-field limit for linear
inverse problems, together with making connection to the appropriate nonlinear
Fokker-Planck equation whose solution characterizes the distribution in the
mean-field limit.

In this paper, we study a new noisy version of EKI,  the ensemble Kalman sampler
(EKS), and related mean-field limits, the aim being the construction of methods
which lead to approximate posterior samples, without the use of adjoints, and
overcoming the issue that the standard noisy EKI does not reproduce the
posterior distribution, as highlighted in \citep{ernst2015analysis}. We emphasize
that the practical derivative-free algorithm that we propose rests on a
particle-based approximation of a specific preconditioned gradient flow, as
described in section 4.3 of the paper \citep{KovachkiStuart2018_ensemble}; we add
a judiciously chosen noise to this setting and it is this additional noise which
enables approximate posterior sampling. Related approximations are also studied
in the paper \citep{pathiraja2019discrete} in which the effect of both
time-discretization and particle approximation are discussed when applied to
various deterministic interacting particle systems with gradient structure. In
order to frame the analysis of our methods, we introduce a new metric, named the
Kalman-Wasserstein metric, defined through both the covariance matrix of the
mean field limit and the Wasserstein metric. The work builds on the novel
perspectives introduced in \citep{herty2018kinetic} and leads to new algorithms
that will be useful within large-scale parameter learning and uncertainty
quantification studies, such as those proposed in \citep{schneider2017earth}.

\subsection{Our Contribution}
The contributions in this paper are:
\begin{itemize}
    \item  We introduce a new noisy perturbation of the continuous time
    ensemble Kalman inversion (EKI) algorithm, leading to an interacting
particle system in  stochastic
differential equation (SDE) form, the ensemble Kalman sampler (EKS).

\item  We also introduce a related SDE, in which ensemble differences are approximated by gradients; this approximation is exact for linear
inverse problems. We study the mean-field limit of this related SDE, and
    exhibit a novel Kalman--Wasserstein gradient flow structure in the associated nonlinear Fokker-Planck equation.

    \item Using this Kalman--Wasserstein structure we characterize the steady states of the
    nonlinear Fokker-Planck equation, and show that one of them is the posterior
    density \eqref{eq:post2}.

    \item By explicitly solving the nonlinear Fokker-Planck equation in the case of linear $\G$,
    we demonstrate that the posterior density is a global attractor for all initial densities of finite energy which
    are not a Dirac measure.

    \item We provide numerical examples which demonstrate that
the EKS algorithm gives good approximate samples from the posterior
distribution for both a simple low dimensional test problem, and for a PDE
inverse problem arising in Darcy flow.

\end{itemize}

In Section \ref{sec:DSS} we introduce the various stochastic dynamical systems which form the basis for the
proposed methodology and analysis: Subsection \ref{ssec:Lan} describes an interacting particle system
variant on Langevin dynamics; Subsection \ref{ssec:EKI}  recaps the EKI
methodology, and describes the SDE arising in the case when the data
is perturbed with noise; and Subsection \ref{ssec:EKS} introduces the new noisy EKS algorithm, which arises from perturbing the particles with noise, rather than perturbing the
data. In Section \ref{sec:TP} we discuss the theoretical properties
underpinning the proposed new methodology and in Section
\ref{sec:NE} we describe numerical results which demonstrate the
value of the proposed new methodology. We conclude in Section \ref{sec:C}.

\section{Dynamical Systems Setting}
\label{sec:DSS}

This section is devoted to the various noisy dynamical systems that underpin the
paper: in the three constituent subsections we introduce an interacting particle
version of Langevin dynamics, the EKI algorithm and the new EKS algorithm. In so
doing, we introduce a sequence of continuous time problems that are designed to
either maximise the posterior distribution $\pi(u)$ (EKI), or generate
approximate samples from the posterior distribution $\pi(u)$ (noisy EKI and the
EKS). We then make a linear approximation within part of the EKS and take the
mean-field limit leading to a novel nonlinear Fokker-Planck equation studied in
the next section.

\subsection{Variants On Langevin Dynamics}
\label{ssec:Lan}

The overdampled Langevin equation has the form
\begin{align}
\label{eq:ble}
\dot{u} = - \nabla \Phi_R(u) + \sqrt{2} \,  \dot{\textbf{W}}\,;
\end{align}
where $\textbf{W}$ denotes a standard Brownian motion in $\mathbb{R}^d.$\footnote{In this SDE, and all that follow,
the rigorous interpretation is through the It\^o integral formulation of the problem.}
References to the relevant literature may be found in the introduction.
A common approach to speed up convergence is to introduce a symmetric matrix $\bC$ in the corresponding gradient descent scheme,
\begin{align} \label{eq:eki-basic}
\dot{u} = - \bC\nabla \Phi_R(u) + \sqrt{2 \, \bC} \,  \dot{\textbf{W}}\,.
\end{align}
The key concept behind this stochastic dynamical system is that, under
conditions on $\Phi_R$ which ensure ergodicity, \emph{an arbitrary initial
distribution is transformed into the desired posterior distribtion
over an infinite time horizon.}

To find a suitable matrix $\bC\in\mathbb{R}^{d\times d}$ is of general interest. We propose to evolve an interacting set of particles $U=\{u^{(j)}\}_{j = 1}^J$ according to the following system of SDEs:
\begin{align} \label{eq:eki-basic-iter}
\dot{u}^{(j)} = - \bC(U)\nabla \Phi_R(u^{(j)}) + \sqrt{2 \, \bC(U)} \,  \dot{\textbf{W}}^{(j)}\,,
\end{align}
Here, the  $\{\textbf{W}^{(j)}\}$ are a collection of i.i.d. standard Brownian motions in the
space $\R^d$. The matrix $\bC(U)$ depends non-linearly on all ensemble members, and is chosen to be the empirical covariance between particles,
\begin{align}
\label{eq:ec}
\bC(U) &= \frac{1}{J} \sum_{k = 1}^J (u^{(k)} - \bar{u}) \otimes
(u^{(k)} - \bar{u}) \in \R^{d\times d}\,.
\end{align}
where $\bar{u}$ denotes the sample mean
\begin{align*}
 \bar{u} &= \frac{1}{J} \sum_{j = 1}^J u^{(j)}\,.
\end{align*}
This choice of preconditioning is motivated by an underlying gradient flow structure which we exhibit in Section~\ref{ssec:KWGF}. System \eqref{eq:eki-basic-iter} can be re-written as
\begin{align} \label{eq:implement-add}
\dot{u}^{(j)} = - \frac{1}{J}\sum_{k = 1}^J \, \langle D\G(u^{(j)})\bigl(u^{(k)} - \bar{u}\bigr), \G(u^{(j)}) - y \rangle_\Gamma \, u^{(k)} \, - \,
\bC(U) \Gamma_0^{-1}u^{(j)} +\,\sqrt{2\bC(U)} \, \dot{\textbf{W}}^{(j)}.
\end{align}
(We used the fact that it is possible to replace $u^{(k)}$ by $u^{(k)}-\bar{u}$ after the $\Gamma-$weighted inner-product in
\eqref{eq:implement-add} without changing the equation.) We will introduce an ensemble Kalman based methodology to approximate this
interacting particle system, the EKS.

\subsection{Ensemble Kalman Inversion}
\label{ssec:EKI}
To  understand the EKS we first recall the ensemble Kalman inversion (EKI) methodology which can be interpreted as a derivative-free
optimization algorithm to invert $\G$ \citep{iglesias2013ensemble,iglesias2016regularizing}.
The continuous time version of the algorithm is given
by \citep{schillings2017analysis}:
\begin{align} \label{eq:eki_no}
\dot{u}^{(j)} = - \frac{1}{J}\sum_{k = 1}^J \, \langle \G(u^{(k)}) - \bar{\G}, \G(u^{(j)}) - y \rangle_\Gamma \, u^{(k)}\,.
\end{align}
This interacting particle dynamic acts to both drive particles towards consensus and to fit the data.
In \citep{chen2012ensemble,emerick2013investigation} the idea of using ensemble Kalman methods to map prior samples
into posterior samples was introduced (see the introduction for a literature review). Interpreted in our continuous time-setting,
the methodology operates by evolving a noisy set of interacting
particles given by
\begin{align} \label{eq:eki_noise}
\dot{u}^{(j)} = - \frac{1}{J}\sum_{k = 1}^J \, \langle \G(u^{(k)}) - \bar{\G}, \G(u^{(j)}) - y \rangle_\Gamma \, u^{(k)} \,  + \, \bC^{up}(U) \, \Gamma^{-1} \, \sqrt{\Sigma} \, \dot{\textbf{W}}^{(j)},
\end{align}
where the  $\{\textbf{W}^{(j)}\}$ are a collection of i.i.d. standard Brownian motions in the data space $\R^K;$
different choices of $\Sigma$ allow to remove noise and obtain
an optimization algorithm ($\Sigma=0$) or to add noise in a manner
which, for linear problems, creates a dynamic transporting
the prior into the posterior in one time unit ($\Sigma=\Gamma$, see
discussion below).

Here, the operator $\bC^{up}$ denotes the empirical cross
covariance matrix of the ensemble members,
\begin{align}
\bC^{up}(U) &:= \frac{1}{J} \sum_{k = 1}^J (u^{(k)} - \bar{u} ) \otimes \left( \G(u^{(k)}) - \bar{\G} \right) \in \R^{d\times K},\qquad
\bar{\G} := \frac{1}{J} \sum_{k = 1}^J  \G(u^{(k)}).
\end{align}
The approach is designed in the linear case
\emph{to transform prior samples into
posterior samples in one time unit} \citep{chen2012ensemble}. In contrast to Langevin dynamics
this has the desirable property that it works over a single time unit,
rather than over an infinite time horizon. But it is considerably
more rigid as it requires initialization at the prior.
Furthermore, the long time dynamics do not have the desired sampling property,
but rather collapse to a single point, solving the optimization problem
of minimizing $\Phi(u).$ We now demonstrate these points
by considering the linear problem.

To be explicit we consider the case where
\begin{align}
\label{eq:linear}
\G(u) = Au.
\end{align}
In this case, the regularized misfit equals
\begin{align}\label{eq:phi-reg}
    \Phi_R(u) = \frac{1}{2}\|Au - y\|^2_\Gamma + \frac12 \|u\|^2_{\Gamma_0}.
\end{align}
The corresponding gradient can be written as
\begin{gather}
\label{eq:gather}
    \nabla \Phi_R(u)=B^{-1}u - r\,,\\
    r := A^\top\Gamma^{-1} y \in \R^d\,,\quad
    B:= \Bigl(A^\top\Gamma^{-1} A + \Gamma_0^{-1}\Bigr)^{-1} \in\R^{d\times d}.\notag
\end{gather}
The posterior mean is thus $Br$ and the posterior covariance is $B.$

In the linear setting \eqref{eq:linear} and with the
choice $\Sigma=\Gamma$, the EKI algorithm defined in \eqref{eq:eki_noise}
has mean $\mm$ and covariance $\mC$ which satisfy the closed equations
\begin{subequations}
\begin{align}
    \frac{\rd}{\rd t} \mm(t) &= - \mC(t)\bigl(A^\top\Gamma^{-1} A\mm(t) - r\big) \label{eq:mudot0}\\
    \frac{\rd}{\rd t} \mC(t)&=-\mC(t)A^\top\Gamma^{-1} A\mC(t) \label{eq:Cdot0}.
\end{align}
\end{subequations}
These results may be established by similar techniques to those used below in Subsection \ref{ssec:LP}. (A more general analysis of the SDE \eqref{eq:eki_noise},
and its related nonlinear Fokker-Planck equation, is undertaken in
\citep{ding2019mean}.) It follows that
\[ \frac{\rd }{\rd t} \mC(t)^{-1}=-\mC(t)^{-1}\left( \frac{\rd }{\rd t} \mC(t)\right)\mC(t)^{-1}= A^\top \Gamma^{-1}A
\]
and therefore $\mC(t)^{-1}$ grows linearly in time.
If the initial covariance is given by the prior $\Gamma_0$ then
$$\mC(t)^{-1}=\Gamma_0^{-1}+A^\top \Gamma^{-1}A t$$
demonstrating that $\mC(1)$ delivers the posterior
covariance; furthermore it then follows that
$$\frac{d}{dt}\large\{\mC(t)^{-1}\mm(t)\large\}=r$$
so that, initializing with prior mean $\mm(0)=0$ we obtain
$$\mm(t)=\Bigl(\Gamma_0^{-1}+A^\top \Gamma^{-1}A t\Bigr)^{-1}rt$$
and $\mm(1)$ delivers the posterior mean.

The resulting equations  for the mean and covariance are simply those which arise from applying the Kalman-Bucy filter \citep{KB} to  the model
\begin{align*}
    \frac{\rd}{\rd t} u &=0\\
    \frac{\rd}{\rd t} z &:=y= Au+\sqrt{\Gamma} \dot{\textbf{W}},
\end{align*}
where $\textbf{W}$ denotes a standard unit Brownian motion in the data space $\R^K.$
The exact closed form of equations for the first two moments, in the setting of the Kalman-Bucy filter, was established in Section 4 of the paper \citep{reich2011dynamical}
for finite particle approximations, and transfers verbatim to this mean-field setting.

The analysis reveals interesting behaviour in the large
time limit: the covariance shrinks to zero and the mean converges
to the solution of the unregularized least squares problem;
we thus have ensemble collapse and solution of an optimization problem,
rather than a sampling problem. This highights an interesting perspective
on the EKI, namely as an optimization method rather than a sampling
method. A key point to appreciate is that the noise introduced
in \eqref{eq:eki_noise} arises from
the observation $y$ being perturbed with additional noise. In what follows we instead directly
perturb the particles themselves. The benefits of introducing noise on the particles, rather than the data,
was demonstrated in \citep{KovachkiStuart2018_ensemble}, although in that setting only optimization, and not Bayesian
inversion, is considered.

\subsection{The Ensemble Kalman Sampler}
\label{ssec:EKS}

We now demonstrate how to introduce noise on the particles within the ensemble Kalman methodology, with our starting point being
\eqref{eq:implement-add}. This gives the EKS. In contrast to the standard
noisy EKI \eqref{eq:eki_noise}, the EKS is based on a dynamic which
\emph{transforms an arbitrary initial
distribution into the desired posterior distribution, over an infinite
time horizon.}
In many applications, derivatives of the forward map $\G$ are either not available, or extremely costly to obtain. A common technique used in ensemble
Kalman methods is to approximate the gradient $\nabla\Phi_R$ by differences in order to obtain a derivative-free algorithm for inverting $\G$.
To this end, consider the dynamical system \eqref{eq:implement-add} and invoke the approximation
$$D\G(u^{(j)})\bigl(u^{(k)} - \bar{u}\bigr) \approx \bigl(\G(u^{(k)}) - \bar{\G}\bigr).$$
This leads to the following derivative-free algorithm to generate approximate samples from the posterior distribution,
\begin{align} \label{eq:implement}
\dot{u}^{(j)} = - \frac{1}{J}\sum_{k = 1}^J \, \langle \G(u^{(k)}) - \bar{\G}, \G(u^{(j)}) - y \rangle_\Gamma \, u^{(k)} \, - \,
\bC(U) \Gamma_0^{-1}u^{(j)} +\,\sqrt{2\bC(U)} \, \dot{\textbf{W}}^{(j)}.
\end{align}
This dynamical system is similar to the noisy EKI \eqref{eq:eki_noise}
but has a different noise structure (noise in parameter space not data space)
and explicitly accounts for the prior on the right hand side (rather
than having it enter through initialization). Inclusion of the
Tikhonov regularization term within EKI is introduced and studied
in \citep{chada2019tikhonov}.

Note that in the linear case \eqref{eq:linear}
the two systems \eqref{eq:implement-add} and \eqref{eq:implement}  are identical.  It is also natural to conjecture that if the particles are close
to one another then \eqref{eq:implement-add} and \eqref{eq:implement} will generate similar particle distributions. Based on this exact (in the linear case)
and conjectured (in the nonlinear case) relationship we
propose \eqref{eq:implement} as a derivative-free algorithm to approximately sample the Bayesian posterior distribution, and we propose
\eqref{eq:implement-add} as a natural object of analysis in order to understand this sampling algorithm.

\subsection{Mean Field Limit}
In order to write down the mean field limit of \eqref{eq:implement-add}, we define the macroscopic mean and covariance:
\begin{align*}
   m(\rho):=\int v\rho\,\rd v  \,,\qquad
   \CC(\rho) := \int  \bigl(v - m(\rho)\bigr) \otimes \bigl(v - m(\rho)\bigr) \,\rho(v) \, \rd v\,.
\end{align*}
Taking the large  particle limit leads to the mean field  equation
\begin{align} \label{eq:mf-geki}
\dot{u} = -\, \CC(\rho) \nabla\Phi_R(u) + \sqrt{2 \, \CC(\rho)} \, \dot{W},
\end{align}
with corresponding nonlinear Fokker-Planck equation
\begin{align}
\label{eq:NLFP}
\partial_t \rho = \nabla \cdot \bigl( \rho \, \CC(\rho) \nabla\Phi_R(u)\bigr) +  \, \CC(\rho): D^2 \rho\,.
\end{align}
Here $A_1:A_2$ denotes the Frobenius inner-product between matrices $A_1$ and $A_2$.
The existence and form of the mean-field limit is suggested by the
exchangeability of the process (existence) and by application of the
law of large numbers (form). Exchangeability is exploited in a related
context in \citep{del2017stability,del2018stability}.
The rigorous derivation of the mean-field equations \eqref{eq:mf-geki} and \eqref{eq:NLFP} is left for future work; for foundational work
 relating to mean field limits, see \citep{sznitman,Jabin2017,CFTV10,HaTadmor08,PareschiToscani_book,Toscani06Opinion} and the references therein.
The following lemma states the intuitive fact that the covariance, which plays a central role in equation
\eqref{eq:NLFP}, vanishes only for Dirac measures.

\begin{lemma}\label{lem:zerocov}
The only probability densities $\rho\in \mP(\R^d)$ at which $\CC(\rho)$ vanishes are Diracs,
$$
\rho(u)=\delta_{v}(u) \text{ for some } v\in\R^d \quad \Leftrightarrow \quad \CC(\rho)=0\,.
$$
\end{lemma}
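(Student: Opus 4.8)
The plan is to prove the two implications separately; both are elementary, and the real content is entirely in the direction ($\Rightarrow$). For the easy direction ($\Leftarrow$), if $\rho=\delta_v$ then $m(\rho)=v$, and the matrix-valued integrand $(w-m(\rho))\otimes(w-m(\rho))$ appearing in the definition of $\CC(\rho)$ is evaluated only at the atom $w=v$, where it is the zero matrix; hence $\CC(\rho)=0$.

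For the substantive direction ($\Rightarrow$), the key observation is that $\CC(\rho)$ is symmetric and positive semi-definite, so it vanishes if and only if its trace does. I would therefore compute
\[
\Tr\,\CC(\rho)=\int |v-m(\rho)|^2\,\rho(v)\,\rd v
\]
and note that the integrand is nonnegative while $\rho$ is a probability measure; thus $\Tr\,\CC(\rho)=0$ forces $|v-m(\rho)|^2=0$ for $\rho$-almost every $v$, i.e. $v=m(\rho)$ on a set of full $\rho$-measure, so that $\rho=\delta_{m(\rho)}$, which is a Dirac. (Equivalently, one may test the quadratic form $\xi^\top\CC(\rho)\,\xi=\int\langle\xi,v-m(\rho)\rangle^2\,\rho(v)\,\rd v$ against the standard basis vectors $\xi=e_1,\dots,e_d$ and intersect the $d$ resulting full-measure sets to conclude $v=m(\rho)$ $\rho$-almost everywhere.)

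The argument presents no genuine obstacle; the only points deserving a remark are that $m(\rho)$ and $\CC(\rho)$ must be well defined, which is guaranteed once we restrict attention to $\rho$ with finite second moment — automatic for the finite-energy densities relevant in the sequel — and that the word ``density'' in the statement is being used loosely, so as to include the Dirac limits even though these carry no Lebesgue density.
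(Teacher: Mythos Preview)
Your proof is correct and essentially coincides with the paper's own argument: both directions are handled the same way, and for the nontrivial implication both you and the paper reduce to the scalar identity $\Tr\,\CC(\rho)=\int |v-m(\rho)|^2\,\rho(v)\,\rd v=0$ (the paper phrases this as the equality case of Jensen's inequality for $|\cdot|^2$, which is exactly your observation that a nonnegative integrand with zero integral vanishes $\rho$-a.e.). Your parenthetical remarks on second-moment assumptions and the loose use of ``density'' are apt but not treated in the paper's proof.
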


\begin{proof}
That $\CC(\delta_v)=0$ follows by direct substitution. For the converse, note that $\CC(\rho)=0$ implies $\int |u|^2\rho\, \rd u=\left(\int u\rho\, \rd u\right)^2$, which is the equality case of Jensen's inequality, and therefore only holds if $\rho$ is the law of a constant random variable.
\end{proof}

\nc

%
%
%
%

\section{Theoretical Properties}
\label{sec:TP}

In this section we discuss theoretical properties of \eqref{eq:NLFP} which motivate the use of \eqref{eq:implement-add} and
\eqref{eq:implement} as particle systems to generate approximate samples from
the posterior distribution \eqref{eq:post2}. In Subsection \ref{ssec:NP} we exhibit a gradient
flow structure for \eqref{eq:NLFP} which shows that solutions evolve towards the posterior
distribution \eqref{eq:post2} unless they collapse to a Dirac measure. In Subsection \ref{ssec:LP}
we show that in the linear case, collapse to a Dirac does not occur if the initial condition is a Gaussian with non-zero covariance, and instead convergence to the
posterior distribution is obtained. In Subsection \ref{ssec:KWGF} we introduce a novel metric structure which
underpins the results in the two preceding sections, and will allow for a rigorous analysis of the long-term
behavior of the nonlinear Fokker-Planck equation in future work.

\subsection{Nonlinear Problem}
\label{ssec:NP}
Because $\CC(\rho)$ is independent of $u$, we may write equation \eqref{eq:NLFP} in divergence form, which facilitates the revelation of a gradient structure:
\begin{align}
\label{eq:NLFP2}
\partial_t \rho = \nabla \cdot \bigl( \rho \, \CC(\rho) \nabla\Phi_R(u) +  \, \rho \, \CC(\rho) \nabla\ln \rho\bigr)\,,
\end{align}
where we use the fact $\rho\nabla\ln\rho=\nabla\rho$.
Indeed, equation \eqref{eq:NLFP2} is nothing but the Fokker-Planck equation for \eqref{eq:eki-basic} for a time-dependent matrix $\bC(t)=\CC(\rho)$.
Thanks to the divergence form, it follows that \eqref{eq:NLFP2} conserves mass along the flow, and so we may assume $\int\rho(t,u)\,\rd u =1$ for all $t\ge0$. Defining the energy
\begin{align}
\label{eq:energy}
E(\rho)=\int \Bigl(\rho(u)\Phi_R(u)+\rho(u)\ln \rho(u)\Bigr)\,\rd u\,  ,
\end{align}
solutions to \eqref{eq:NLFP2} can be written as a gradient flow:
\begin{align}
\label{eq:NLFP3}
\partial_t \rho = \nabla \cdot \left( \rho \, \CC(\rho) \nabla \frac{\delta E}{\delta \rho} \right)\,,
\end{align}
where $\frac{\delta}{\delta\rho}$ denotes the $L^2$ first variation.
This will be made more explicit in Section~\ref{ssec:KWGF}, see Proposition \ref{prop2}.
Thanks to the gradient flow structure \eqref{eq:NLFP3}, stationary states of \eqref{eq:NLFP} are given either by critical points of the energy $E$, or by choices of $\rho$ such that $\CC(\rho)=0$ as characterized in Lemma~\ref{lem:zerocov}.
Critical points of $E$ solve the corresponding Euler-Lagrange condition
\begin{equation}\label{eq:EL}
    \frac{\delta E}{\delta \rho}= \Phi_R(u)+\ln\rho(u)=c \qquad \text{ on } \supp(\rho)
\end{equation}
for some constant $c$. The unique solution to \eqref{eq:EL} with unit mass is given by the Gibbs measure
\begin{equation}\label{eq:Gibbs}
    \rho_\infty(u):=\frac{e^{-\Phi_R(u)}}{\int e^{-\Phi_R(u)}\,\rd u}\,.
\end{equation}
Then, up to an additive normalization constant, the energy $E(\rho)$ is exactly the relative entropy of $\rho$ with respect to $\rho_\infty$, also known as the Kullback-Leibler divergence $\KL(\rho(t)\|\rho_\infty)$,
\begin{align*}
  E(\rho)
  &= \int \left(\Phi_R+\ln\rho(t)\right)\rho\,\rd u \\
  &=\int \frac{\rho(t)}{\rho_\infty} \ln\left(\frac{\rho(t)}{\rho_\infty}\right)\, \rho_\infty\,\rd u
  +\ln\left(\int e^{-\Phi_R(u)}\,\rd u\right)\\
   &=\KL(\rho(t)\|\rho_\infty)
  +\ln\left(\int e^{-\Phi_R(u)}\,\rd u\right)\,.
\end{align*}
Thanks to the gradient flow structure \eqref{eq:NLFP3}, we can compute the dissipation of the energy
\begin{equation}\label{fisher}
\begin{split}
    \frac{\rd}{\rd t}\Bigl\{ E(\rho)\Bigr \}&=\left\langle \frac{\delta E}{\delta \rho}\,,\, \partial_t \rho  \right\rangle_{L^2(\R^d)}\\
    &=-\int \rho \left\langle \nabla \frac{\delta E}{\delta \rho}, \CC(\rho) \nabla \frac{\delta E}{\delta \rho} \right\rangle\,\rd u\\
&=-\int \rho \, \Bigl| \CC(\rho)^{\frac12} \nabla (\Phi_R+\ln \rho)\Bigr|^2\,\rd u\,.
\end{split}
\end{equation}
As a consequence,  the energy $E$ decreases along trajectories until either $\CC(\rho)$ approaches zero (collapse to a Dirac
measure by Lemma~\ref{lem:zerocov}) or $\rho$ becomes the Gibbs measure with density $\rho_\infty$.

The dissipation of the energy along the evolution of the classical Fokker-Planck equation is known as the Fisher information \citep{vil2008}. We reformulate equation \eqref{fisher} by defining the following generalized Fisher information for any covariance matrix $\Lambda$,
\begin{equation*}
\mathcal{I}_{\Lambda}(\rho(t)\|\rho_\infty):=\int \rho\,\left\langle \nabla \ln \left(\frac{\rho}{\rho_\infty}\right)\,,\, \Lambda\nabla \ln \left(\frac{\rho}{\rho_\infty}\right)\right\rangle\,\rd u\,.
\end{equation*}
One may also refer to $\mI_\Lambda$ as a Dirichlet form as it is known in the theory of large particle systems, since we can write
\begin{equation*}
\mathcal{I}_{\Lambda}(\rho(t)\|\rho_\infty)=4\int \rho_\infty\,\left\langle \nabla \sqrt{\frac{\rho}{\rho_\infty}}\,,\, \Lambda \nabla \sqrt{\frac{\rho}{\rho_\infty}}\right\rangle\,\rd u\,.
\end{equation*}
For $\Lambda=\CC(\rho)$, we name functional $\mathcal{I}_{\mathcal{C}}$ the relative {\em Kalman-Fisher information}.
We conclude that the following energy dissipation equality holds,
\begin{equation*}
    \frac{\rd}{\rd t} \KL(\rho(t)\|\rho_\infty) = -\mathcal{I}_{\mathcal{C}}(\rho(t)\|\rho_\infty)\,.
\end{equation*}
To derive a rate of decay to equilibrium in entropy, we aim to identify conditions on $\Phi_R$ such that the following logarithmic Sobolev inequality holds: there exists $\lambda>0$ such that
\begin{equation}\label{logSob}
 \KL(\rho(t)\|\rho_\infty)\leq \frac{1}{2\lambda} \mI_{I_d}\left(\rho(t)\|\rho_\infty\right) \qquad \forall \rho\,.
\end{equation}
By \citep{BakryEmery}, it is enough to impose sufficient convexity on $\Phi_R$, i.e.
$D^2\Phi_R \ge \lambda I_d$, where $D^2\Phi_R$ denotes the Hessian of $\Phi_R$. This allows us to deduce convergence to equilibrium as long as $\CC(\rho)$ is uniformly bounded from below following standard arguments for the classical Fokker-Planck equation as presented for example in \citep{MarkoVillani}.

\begin{proposition}\label{prop:entropydecay}
Assume there exists $\alpha>0$ and $\lambda>0$ such that
\begin{equation*}
    \CC(\rho(t)) \ge \alpha I_d\,,\qquad
       D^2\Phi_R \ge \lambda I_d\,.
\end{equation*}
Then any solution $\rho(t)$ to \eqref{eq:NLFP2} with initial condition $\rho_0$ satisfying $\KL(\rho_0\|\rho_\infty)<\infty$ decays exponentially fast to equilibrium: there exists a constant $c=c(\rho_0,\Phi_R)>0$ such that for any $t>0$,
\begin{equation*}
    \|\rho(t)-\rho_\infty\|_{L^1(\R^d)} \leq  c e^{-\alpha \lambda t}\,.
\end{equation*}
\end{proposition}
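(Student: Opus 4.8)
The plan is to run the classical three-step argument for exponential convergence of gradient flows: entropy dissipation identity, plus a spectral gap coming from a logarithmic Sobolev inequality, plus Csisz\'ar--Kullback--Pinsker to pass from relative entropy to $L^1$. First I would invoke the energy dissipation equality $\frac{\rd}{\rd t}\KL(\rho(t)\|\rho_\infty) = -\mI_{\CC}(\rho(t)\|\rho_\infty)$ derived in \eqref{fisher}, which is available because $\KL(\rho_0\|\rho_\infty)<\infty$ and \eqref{eq:NLFP2} is the gradient flow of $\KL(\cdot\|\rho_\infty)$ for the Kalman--Wasserstein metric; in particular $t\mapsto\KL(\rho(t)\|\rho_\infty)$ is finite and non-increasing for all $t\ge 0$, so all quantities below make sense.

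The second step is to compare the relative Kalman--Fisher information with the standard relative Fisher information. Since $\CC(\rho(t))\ge\alpha I_d$ as a symmetric matrix, the pointwise inequality $\langle\xi,\CC(\rho(t))\xi\rangle\ge\alpha|\xi|^2$ holds for every vector $\xi\in\R^d$; applying this with $\xi=\nabla\ln(\rho/\rho_\infty)$ and integrating against $\rho$ gives $\mI_{\CC}(\rho(t)\|\rho_\infty)\ge\alpha\,\mI_{I_d}(\rho(t)\|\rho_\infty)$. Next, because $D^2\Phi_R\ge\lambda I_d$, the Bakry--\'Emery criterion \citep{BakryEmery} guarantees the logarithmic Sobolev inequality \eqref{logSob}, that is $\mI_{I_d}(\rho(t)\|\rho_\infty)\ge 2\lambda\,\KL(\rho(t)\|\rho_\infty)$. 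Chaining these two bounds into the dissipation identity yields the differential inequality $\frac{\rd}{\rd t}\KL(\rho(t)\|\rho_\infty)\le-2\alpha\lambda\,\KL(\rho(t)\|\rho_\infty)$, and Gr\"onwall's lemma gives $\KL(\rho(t)\|\rho_\infty)\le e^{-2\alpha\lambda t}\KL(\rho_0\|\rho_\infty)$.

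Finally I would convert entropy decay into an $L^1$ estimate by the Csisz\'ar--Kullback--Pinsker inequality $\|\rho(t)-\rho_\infty\|_{L^1(\R^d)}^2\le 2\,\KL(\rho(t)\|\rho_\infty)$, which combined with the previous bound gives $\|\rho(t)-\rho_\infty\|_{L^1(\R^d)}\le\sqrt{2\,\KL(\rho_0\|\rho_\infty)}\,e^{-\alpha\lambda t}$; thus $c=\sqrt{2\,\KL(\rho_0\|\rho_\infty)}$ works and depends only on $\rho_0$ and on $\Phi_R$ through $\rho_\infty$, as claimed.

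The main obstacle is not this formal chain of inequalities, which is routine once the ingredients are assembled, but the rigorous justification of the dissipation computation \eqref{fisher} for an actual solution of the nonlinear Fokker--Planck equation \eqref{eq:NLFP2}: one needs enough regularity and decay of $\rho(t)$ to differentiate under the integral, to integrate by parts without boundary terms, and to know the Fisher information is finite for a.e.\ $t$, as well as existence and uniqueness of such a solution. Since well-posedness for \eqref{eq:NLFP2} is deferred to future work, and the lower bound $\CC(\rho(t))\ge\alpha I_d$ is imposed along the trajectory rather than derived, the result is genuinely conditional; a fully rigorous proof would either postulate a priori bounds on $\rho(t)$ in suitable weighted Sobolev spaces, or work with a regularized/approximating family of equations and pass to the limit using lower semicontinuity of the relative entropy and of the Fisher information functionals.
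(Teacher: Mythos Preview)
Your proposal is correct and follows essentially the same route as the paper: the energy dissipation identity \eqref{fisher}, the lower bound $\mI_{\CC}\ge\alpha\,\mI_{I_d}$ from $\CC(\rho)\ge\alpha I_d$, the Bakry--\'Emery logarithmic Sobolev inequality from $D^2\Phi_R\ge\lambda I_d$, Gr\"onwall, and Csisz\'ar--Kullback--Pinsker, yielding the same constant $c=\sqrt{2\,\KL(\rho_0\|\rho_\infty)}$. Your closing remarks on the conditional nature of the result (well-posedness, regularity needed for \eqref{fisher}, and the assumed rather than derived lower bound on $\CC$) are apt and go slightly beyond what the paper makes explicit.
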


This rate of convergence can most likely be improved using the correct logarithmic Sobolev inequality weighted by the covariance matrix $\CC$. However, the above estimate already indicates the effect of having the covariance matrix $\CC$ present in the Fokker-Planck equation \eqref{eq:NLFP2}. The properties of such inequalities in a more general setting is an interesting future avenue to explore. The weighted logarithmic Sobolev inequality that is well adapted to the setting here depends on the geometric structure of the Kalman-Wasserstein metric, see related studies in \citep{LiG}.

\begin{proof}
Thanks to the assumptions, and using the logarithmic Sobolev inequality \eqref{logSob}, we obtain decay in entropy,
\begin{align*}
\frac{\rd}{\rd t} \KL(\rho(t)\|\rho_\infty)
&\le - \alpha \mI_{I_d}(\rho(t)|\rho_\infty)
\le -2\alpha\lambda \KL(\rho(t)\|\rho_\infty) \,.
\end{align*}
We conclude using the Csisz\'ar-Kullback inequality as it is mainly known to analysts, also referred to as Pinsker inequality in probability (see \citep{AMTU} for more details):
\begin{equation*}
   \frac{1}{2} \|\rho(t)-\rho_\infty\|^2_{L^1(\R^d)}
   \le \KL(\rho(t)\|\rho_\infty)
   \le \KL(\rho_0\|\rho_\infty)e^{-2\alpha\lambda t}\,.
\end{equation*}
\end{proof}

\subsection{Linear Problem}
\label{ssec:LP}

Here we show that, in the case of a linear forward operator $\G$, the
Fokker-Planck equation (which is still nonlinear) has exact Gaussian solutions.
This property may be seen to hold in two ways: (i) by considering the
case in which the covariance matrix is an exogenously defined function of time alone, in which case the observation is straightforward; and (ii)  because the
mean field equation \eqref{eq:mf-geki} leads to exact closed equations for the mean and covariance. Once
the covariance is known the nonlinear Fokker-Planck equation \eqref{eq:NLFP} becomes linear,
and is explicitly solvable if $\G$ is linear and the initial condition is Gaussian.
Consider equation \eqref{eq:mf-geki} in the context of a linear observation map \eqref{eq:linear}.
The misfit is given by \eqref{eq:phi-reg}, and the gradient of $\Phi_R$
is given in \eqref{eq:gather}.
Note that since we assume that the covariance matrix $\Gamma_0$ is invertible, it is then also strictly positive-definite. Thus it follows that $B$ is strictly positive-definite and hence invertible too. We define $u_0:=B r$ noting that this is the solution of the regularized normal equations defining the minimizer of $\Phi_R$ in this
linear case; equivalently $u_0$ maximizes the posterior density. Indeed by completing the square we see that we may write
\begin{equation}
    \label{eq:infty}
    \rho_{\infty}(u) \propto \exp\Bigl(-\frac12\|u-u_0\|_{B}^2\Bigr).
\end{equation}

\begin{lemma}
\label{eq:roil}
Let $\rho(t)$ be a solution of \eqref{eq:NLFP} with $\Phi_R(\cdot)$ given by \eqref{eq:phi-reg}. Then the mean $m(\rho)$ and covariance matrix $\CC(\rho)$ are determined by $\mm(t)$ and $\mC(t)$ which satisfy the evolution equations
\begin{subequations}
\begin{align}
    \frac{\rd}{\rd t} \mm(t) &= - \mC(t) (B^{-1}\mm(t) - r) \label{eq:mudot}\\
    \frac{\rd}{\rd t} \mC(t)&=-2\mC(t)B^{-1}\mC(t) + 2\mC(t) \label{eq:Cdot}.
\end{align}
\end{subequations}
In addition, for any $\mC(t)$ satisfying \eqref{eq:Cdot}, its determinant and inverse solve
\begin{align}
    \frac{\rd}{\rd t} \det\mC(t)&=-2\left(\det\mC(t)\right)\Tr\left[B^{-1}\mC(t)-I_d\right]\,, \label{eq:Cdetdot}
\end{align}
\begin{equation}
\label{eq:display}
\frac{\rd}{\rd t}\bigl( \mC(t)^{-1}\bigr)=2B^{-1}-2\mC(t)^{-1}.
\end{equation}
As a consequence $\mC(t) \to B$ and $\mm(t) \to u_0$ exponentially as $t \to \infty$.
\end{lemma}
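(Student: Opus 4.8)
The plan is to reduce the nonlinear Fokker--Planck equation \eqref{eq:NLFP} with the quadratic potential \eqref{eq:phi-reg} to a closed system of ODEs for the first two moments, then obtain the scalar identity \eqref{eq:Cdetdot} and the matrix identity \eqref{eq:display} by routine differentiation, and finally integrate the resulting \emph{linear} equation for $\mC^{-1}$ to read off the exponential convergence.

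First I would derive the moment equations. Set $\mm(t):=m(\rho(t))$ and $\mC(t):=\CC(\rho(t))$, multiply \eqref{eq:NLFP} by $u$ and by the tensor $u\otimes u$, and integrate over $\R^d$, integrating by parts. The boundary terms vanish because in the linear case treated here $\rho(t)$ retains Gaussian tails; equivalently, as in point (i) of the discussion preceding the lemma, once $\CC(\rho)$ is viewed as a prescribed function of time the equation is a linear Fokker--Planck equation propagating Gaussianity, so the integrations by parts are justified. Using $\nabla\Phi_R(u)=B^{-1}u-r$ and the fact that $\CC(\rho)$ does not depend on $u$, the only quantities that survive are $\mm$ and the second moment $M_2(t):=\int u\otimes u\,\rho(t)\,\rd u$ — this closure is precisely the assertion that $m(\rho)$ and $\CC(\rho)$ are determined by the ODE unknowns $\mm(t),\mC(t)$. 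The mean equation is immediate and gives \eqref{eq:mudot}, while for $M_2$ one obtains
\[
\frac{\rd}{\rd t}M_2=-\bigl(\mC B^{-1}M_2+M_2 B^{-1}\mC\bigr)+\bigl(\mC r\otimes \mm+\mm\otimes \mC r\bigr)+2\mC,
\]
the last term being the contribution of the diffusion; this can alternatively be read off directly from the mean-field SDE \eqref{eq:mf-geki} via It\^o's formula and taking expectations. Writing $\mC=M_2-\mm\otimes\mm$, differentiating, and substituting \eqref{eq:mudot}, all the rank-one terms involving $\mC B^{-1}\mm$ and $\mC r$ cancel in pairs, leaving $\frac{\rd}{\rd t}\mC=-2\mC B^{-1}\mC+2\mC$, which is \eqref{eq:Cdot}. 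Verifying this cancellation is the one bookkeeping-heavy step.

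Next, \eqref{eq:Cdetdot} follows from Jacobi's formula $\frac{\rd}{\rd t}\det\mC=(\det\mC)\,\Tr(\mC^{-1}\dot\mC)$ together with $\mC^{-1}\dot\mC=-2B^{-1}\mC+2I_d$, and \eqref{eq:display} follows from $\frac{\rd}{\rd t}\mC^{-1}=-\mC^{-1}\dot\mC\,\mC^{-1}=-\mC^{-1}\bigl(-2\mC B^{-1}\mC+2\mC\bigr)\mC^{-1}=2B^{-1}-2\mC^{-1}$; both computations presuppose that $\mC(t)$ remains invertible, which I establish next.

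Finally, for the long-time behaviour set $X(t):=\mC(t)^{-1}$ and observe that \eqref{eq:display} is the linear matrix ODE $\dot X=2(B^{-1}-X)$, whose unique solution is $X(t)=(1-e^{-2t})B^{-1}+e^{-2t}X(0)$. Since this is a convex combination of the positive-definite matrices $B^{-1}$ and $X(0)=\mC(0)^{-1}$ (the initial Gaussian being non-degenerate), $X(t)$ is positive definite for every $t\ge0$, so $\mC(t)=X(t)^{-1}$ is well defined and, by uniqueness for \eqref{eq:Cdot}, is the solution; moreover $X(t)\to B^{-1}$ at rate $e^{-2t}$, hence $\mC(t)\to B$ exponentially. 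For the mean, put $\xi(t):=\mm(t)-u_0$; since $B^{-1}u_0=r$, \eqref{eq:mudot} becomes $\dot\xi=-\mC(t)B^{-1}\xi$. Setting $v:=B^{-1/2}\xi$ gives $\dot v=-\bigl(B^{-1/2}\mC(t)B^{-1/2}\bigr)v$ with $B^{-1/2}\mC(t)B^{-1/2}=(B^{1/2}X(t)B^{1/2})^{-1}$ symmetric positive definite and, from the explicit formula for $X$, bounded below uniformly in $t\ge0$ by some $\lambda_*>0$; hence $\frac{\rd}{\rd t}|v|^2\le-2\lambda_*|v|^2$ and $\xi(t)=B^{1/2}v(t)\to0$ exponentially, i.e. $\mm(t)\to u_0$ exponentially. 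The main obstacle is the algebraic cancellation that produces \eqref{eq:Cdot}; the other point needing care is the rigorous justification of the moment computations, which is handled by the Gaussian-propagation observation.
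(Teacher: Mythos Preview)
Your proof is correct and follows essentially the same line as the paper's, but two organizational choices differ. For the covariance equation, the paper works with the centered variable $e:=u-m(\rho)$ in the mean-field SDE \eqref{eq:mf-geki}; since $\dot e=-\CC(\rho)B^{-1}e+\sqrt{2\CC(\rho)}\,\dot{\textbf W}$ and $\CC(\rho)=\E[e\otimes e]$, the It\^o correction yields \eqref{eq:Cdot} in one line, bypassing the rank-one cancellations you flag as the ``bookkeeping-heavy step.'' Your route via $M_2$ and $\mC=M_2-\mm\otimes\mm$ is of course equivalent, but the centered-variable trick is worth knowing because it scales cleanly to more complicated drifts. For the long-time behaviour of the mean, the paper obtains the exact identity $\|\mm(t)-u_0\|_{\mC(t)}=\|\mm(0)-u_0\|_{\mC(0)}\,e^{-t}$ by differentiating $\|\xi\|_{\mC}^2$ and using \eqref{eq:display}; the $B^{-1}$-terms cancel and one is left with $\frac{\rd}{\rd t}\|\xi\|_{\mC}^2=-2\|\xi\|_{\mC}^2$. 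This gives a sharper and cleaner statement than your coercivity argument via $v=B^{-1/2}\xi$, though yours has the advantage of making the role of the uniform lower bound on $\mC(t)$ explicit, which is exactly what is needed later in Corollary~\ref{p:p2}. Otherwise---the derivation of \eqref{eq:Cdetdot} and \eqref{eq:display} via Jacobi's formula and $\frac{\rd}{\rd t}\mC^{-1}=-\mC^{-1}\dot\mC\,\mC^{-1}$, and the explicit solution of the linear ODE for $\mC^{-1}$---your argument matches the paper's.
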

In fact, solving the ODE \eqref{eq:display} explicitly and using \eqref{eq:mudot}, exponential decay immediately follows:
\begin{align}
   & \mC(t)^{-1}=\left(\mC(0)^{-1}-B^{-1}\right)e^{-2t}+B^{-1}\,,\label{eq:Cinversesol}
   \end{align}
   and
   \begin{align}
    &\|\mm(t)-u_0\|_{\mC(t)}=\|\mm(0)-u_0\|_{\mC(0)}e^{-t}\,.\label{eq:msol}
\end{align}
\begin{proof}

We begin by deriving the evolution of the first and second moments. This is most easily accomplished by working
with the mean-field flow SDE \eqref{eq:mf-geki}, using the regularized
linear misfit written in \eqref{eq:phi-reg}. This yields the update
\begin{align*}
    \dot u = - \CC(\rho) \, (B^{-1}u - r) + \sqrt{2 \, \CC(\rho)} \, \dot{\textbf{W}}\,,
\end{align*}
where $\dot{\textbf{W}}$ denotes a zero mean random variable. Identical results can be obtained by working directly with
the PDE for the density, namely \eqref{eq:NLFP} with the regularized
linear misfit given in \eqref{eq:phi-reg}.
Taking expectations with
respect to $\rho$ results in
\begin{align*}
    \dot m(\rho) = - \CC(\rho) \, (B^{-1}m(\rho) - r).
\end{align*}
Let us use the following auxiliary variable $e = u - m(\rho)$. By linearity of
differentiation we can write
\begin{align*}
    \dot e = -  \CC(\rho) \, B^{-1} \, e + \sqrt{2\, \CC(\rho)} \, \dot{\textbf{W}}.
\end{align*}
By definition of the covariance operator, $\CC(\rho) = \E[e \otimes e]$, its
derivative with respect to time can be written as
\begin{align*}
    \dot \CC (\rho) = \E[\dot e \otimes e + e \otimes \dot e].
\end{align*}
However we must also include the It\^o correction, using It\^o's formula, and we can write the evolution equation of the covariance
operator as
\begin{align*}
    \dot \CC (\rho) = - 2 \, \CC(\rho)\,B^{-1}\,\CC(\rho) + 2\, \CC(\rho).
\end{align*}
This concludes the proof of \eqref{eq:Cdot}. For the evolution of the determinant and inverse, note that
\begin{align*}
     \frac{\rd}{\rd t} \det\CC(\rho)
     &=\Tr\left[\det\CC(\rho) \, \CC(\rho)^{-1} \,  \frac{\rd}{\rd t}\CC(\rho)\right]\,,
     \quad
      \frac{\rd}{\rd t} \CC(\rho)^{-1} = -\CC(\rho)^{-1} \left( \frac{\rd}{\rd t} \CC(\rho)\right) \CC(\rho)^{-1}\,,
\end{align*}
and so \eqref{eq:Cdetdot}, \eqref{eq:display} directly follow. Finally, exponential decay is a consequence of the explicit expressions \eqref{eq:Cinversesol} and \eqref{eq:msol}.
\end{proof}

Thanks to the evolution of the covariance matrix and its determinant, we can deduce that there is a family of Gaussian initial conditions that stay Gaussian along the flow and converge to the equilibrium $\rho_\infty$.

\begin{proposition}
\label{p:p}
Fix a vector $m_0\in \R^d$, a matrix $\CC_0\in\R^{d\times d}$ and take as
initial density the Gaussian distribution
\begin{equation*}
    \rho_0(u):=\frac{1}{(2\pi)^{d/2}}(\det \CC_0)^{-1/2} \exp\left(-\frac{1}{2} ||u-m_0||^2_{\CC_0}\right)
\end{equation*}
with mean $m_0$ and covariance $\CC_0$. Then the Gaussian profile
\begin{equation*}
        \rho(t,u):=\frac{1}{(2\pi)^{d/2}}(\det \mC(t))^{-1/2} \exp\left(-\frac{1}{2}\bigl|\bigl| u-\mm(t)\bigl|\bigl|_{\mC(t)}^2\right)
\end{equation*}
solves evolution equation \eqref{eq:NLFP} with initial condition $\rho(0,u)=\rho_0(u)$, and where $\mm(t)$ and $\mC(t)$ evolve
according to \eqref{eq:mudot} and \eqref{eq:Cdot} with initial conditions $m_0$ and $\CC_0$. As a consequence, for such initial
conditions $\rho_0(u)$, the solution of the Fokker-Planck equation \eqref{eq:NLFP} converges to $\rho_{\infty}(u)$ given by \eqref{eq:infty} as $t \to \infty.$
\end{proposition}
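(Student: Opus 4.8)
The plan is to verify the ansatz directly, using the fact -- already exploited to pass from \eqref{eq:NLFP} to \eqref{eq:NLFP2} -- that $\CC(\rho)$ does not depend on the spatial variable $u$. If $\rho(t,\cdot)$ is the Gaussian with mean $\mm(t)$ and covariance $\mC(t)$, then by construction its macroscopic mean and covariance in the sense of the definitions preceding \eqref{eq:mf-geki} are $m(\rho(t,\cdot))=\mm(t)$ and $\CC(\rho(t,\cdot))=\mC(t)$; this is the key self-consistency point, since it makes \eqref{eq:NLFP}, evaluated along this family, reduce to the \emph{linear}, time-dependent Fokker--Planck equation $\partial_t\rho=\nabla\cdot\bigl(\rho\,\mC(t)\nabla\Phi_R\bigr)+\mC(t):D^2\rho$ with $\nabla\Phi_R(u)=B^{-1}u-r$ from \eqref{eq:gather}. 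Before substituting, I would record that $\mC(t)$ is well defined and positive-definite for all $t\ge0$: by Lemma~\ref{eq:roil}, equation \eqref{eq:Cinversesol} gives $\mC(t)^{-1}=e^{-2t}\,\CC_0^{-1}+(1-e^{-2t})\,B^{-1}$, a convex combination of positive-definite matrices, so $\rho(t,u)$ is a genuine probability density and moreover $\underline c\,I_d\preceq\mC(t)\preceq\overline c\,I_d$ uniformly in $t$.

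Next I would plug the Gaussian into this linear equation and match coefficients. Dividing by $\rho$ and writing $\ln\rho=-\tfrac12\ln\det\mC(t)-\tfrac12\|u-\mm(t)\|^2_{\mC(t)}$ up to an additive constant, the left side $\partial_t\ln\rho$ is a quadratic polynomial in $e:=u-\mm(t)$ with quadratic coefficient $-\tfrac12\partial_t(\mC^{-1})$, linear coefficient $\mC^{-1}\dot\mm$, and constant term $-\tfrac12\Tr(\mC^{-1}\dot\mC)$. On the right side, using $\nabla\ln\rho=-\mC^{-1}e$, $D^2\ln\rho=-\mC^{-1}$ and $D^2\Phi_R=B^{-1}$, a short computation gives quadratic coefficient $\mC^{-1}-B^{-1}$, linear coefficient $-(B^{-1}\mm-r)$ and constant term $\Tr(\mC B^{-1})-d$. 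Matching the quadratic terms yields $\partial_t(\mC^{-1})=2B^{-1}-2\mC^{-1}$, which is \eqref{eq:display} and hence equivalent to \eqref{eq:Cdot}; matching the linear terms yields $\dot\mm=-\mC(B^{-1}\mm-r)$, which is \eqref{eq:mudot}; and the constant-term identity $-\tfrac12\Tr(\mC^{-1}\dot\mC)=\Tr(\mC B^{-1})-d$ is then automatic, being the trace of $-\tfrac12\mC^{-1}\dot\mC=B^{-1}\mC-I_d$, a consequence of \eqref{eq:Cdot}. Since $\mm(t),\mC(t)$ solve exactly \eqref{eq:mudot}--\eqref{eq:Cdot} by hypothesis, the ansatz solves \eqref{eq:NLFP}, and $\rho(0,\cdot)=\rho_0$ holds by construction.

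The final assertion then follows from the long-time behaviour in Lemma~\ref{eq:roil}, $\mC(t)\to B$ and $\mm(t)\to u_0$ exponentially: the Gaussian densities $\rho(t,\cdot)$ converge, pointwise and in $L^1(\R^d)$ (for instance by dominated convergence against a fixed Gaussian envelope supplied by the uniform bounds above), to the Gaussian with mean $u_0$ and covariance $B$, which is precisely $\rho_\infty$ in the form \eqref{eq:infty}.

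I do not expect a genuine obstacle: once the $u$-independence of $\CC(\rho)$ is used, the statement is a finite-dimensional ODE verification. The one point that deserves care is the self-consistency underlying the first step -- confirming $\CC(\rho(t,\cdot))=\mC(t)$ so that replacing $\CC(\rho)$ by $\mC(t)$ in \eqref{eq:NLFP} is legitimate -- together with positivity of $\mC(t)$ for all $t$; both are delivered by the convex-combination form of \eqref{eq:Cinversesol}. If one wishes the phrase ``the solution'' in the last sentence to be unambiguous, a brief uniqueness remark (Gaussian initial data forcing Gaussian solutions, or uniqueness for \eqref{eq:NLFP2} among finite-energy densities) could be appended, though it is not needed for the claim as stated.
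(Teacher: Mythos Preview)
Your proposal is correct and follows essentially the same approach as the paper: direct substitution of the Gaussian ansatz into \eqref{eq:NLFP}, using that $\CC(\rho)$ is $u$-independent, and then invoking Lemma~\ref{eq:roil} for the asymptotics. The only cosmetic difference is that you organize the verification by matching coefficients of the quadratic polynomial in $e=u-\mm(t)$ after passing to $\ln\rho$, whereas the paper computes both sides of \eqref{eq:NLFP} directly on $\rho$; your version is arguably cleaner and in addition makes explicit the self-consistency $\CC(\rho(t,\cdot))=\mC(t)$ and the positivity of $\mC(t)$ via \eqref{eq:Cinversesol}, points the paper leaves implicit.
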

\begin{proof}
It is straightforward to see that, for $m(\rho)$ and $\CC(\rho)$ given by Lemma \ref{eq:roil},
$$\nabla \rho=-\CC(\rho)^{-1}(u-m(\rho))\,\rho,$$
since both $m(\rho)$ and $\CC(\rho)$ are independent of $u$. Therefore,
substituting the Gaussian ansatz $\rho(t,u)$ into the first term in the right
hand side of \eqref{eq:NLFP}, we have
\begin{align}
    \nabla \cdot\left(\rho \, \CC(\rho)(B^{-1}u - r)\right)
    &= (\nabla\rho )\cdot \CC(\rho)(B^{-1}u-r) + \rho \nabla \cdot(\CC(\rho)B^{-1}u) \notag\\
    &= \left(-\CC(\rho)^{-1}(u-m(\rho))\cdot \CC(\rho) (B^{-1}u-r) +  \Tr[\CC(\rho)B^{-1}]\right)\rho\notag\\
    &= \left(- \bigl|\bigl|u-m(\rho)\bigl|\bigl|^2_{B} +  \Bigl\langle u - m(\rho), u_0 - m(\rho)\Bigl\rangle_{B} + \Tr[\CC(\rho)B^{-1}]\right)\rho,
\end{align}
where $B^{-1} = A^\top\Gamma^{-1} A + \Gamma_0^{-1}$, $r = A^\top\Gamma^{-1} y$ and $u_0 = B\,r$.  Recall that $B^{-1}$ is invertible.
The second term on the right hand side of \eqref{eq:NLFP} can be simplified, as follows
\begin{align}
\CC(\rho): D^2 \rho &= \CC(\rho) : \Bigl( - \CC(\rho)^{-1} + \bigl(\CC(\rho)^{-1} (u - m(\rho))\bigr) \otimes \bigl(\CC(\rho)^{-1} (u - m(\rho))\bigr) \Bigr) \rho \notag \\
&= \left(- \Tr[I_d] + ||u - m(\rho)||^2_{\CC(\rho)}\right) \rho.
\end{align}
Thus, combining the previous two equations, the right hand side of \eqref{eq:NLFP} is given by  the following expression
\begin{align}
\Biggl[\Tr[B^{-1}\CC(\rho) - I_{d}] - ||u-m(\rho)||^2_{B}+ \bigl|\bigl|u-m(\rho)\bigl|\bigl|^2_{\CC(\rho)} + \Bigl\langle u - m(\rho), u_0 - m(\rho)\Bigl\rangle_{B}  \Biggl] \rho.
\end{align}
For the left-hand side of \eqref{eq:NLFP}, note that by \eqref{eq:mudot} and \eqref{eq:Cdot},
\begin{align*}
\frac{\rd}{\rd t} \bigl|\bigl| u-m(\rho)\bigl|\bigl|_{\CC(\rho)}^2
&= 2\left\langle \frac{\rd}{\rd t} (u-m(\rho))\,,\,\CC(\rho)^{-1}(u-m(\rho))\right\rangle \\
& \quad + \left\langle (u-m(\rho))\,,\,\frac{\rd}{\rd t}\bigl(\CC(\rho)^{-1}\bigr)(u-m(\rho))\right\rangle\\
&=- 2\bigl\langle u-m(\rho), u_0 - m(\rho)\bigl\rangle_{B} +2\, ||u-m(\rho)||^2_{B} - 2\, ||u-m(\rho)||^2_{\CC(\rho)}
\end{align*}
and therefore, combining with \eqref{eq:Cdetdot},
\begin{align}
\partial_t\rho
&= \left[-\frac{1}{2}(\det\CC(\rho))^{-1}\left(\frac{\rd}{\rd t}\det\CC(\rho)\right)
-\frac{1}{2}\frac{\rd}{\rd t} || u-u_0||_{\CC(\rho)}^2\right] \rho \notag\\
&= \Biggl[\Tr[B^{-1}\CC(\rho) - I_{d}] - ||u-m(\rho)||^2_{B}+ \bigl|\bigl|u-m(\rho)\bigl|\bigl|^2_{\CC(\rho)} + \Bigl\langle u - m(\rho), u_0 - m(\rho)\Bigl\rangle_{B}  \Biggl] \rho,
\end{align}
which concludes the first part of the proof. The second part concerning the large time asymptotics is a straightforward consequence of the
asymptotic behaviour of $\mm$ and $\mC$ detailed in Lemma \ref{eq:roil}.
\end{proof}

In the case of the classical Fokker-Planck equation $\mC(t)=I_d$ with a quadratic confining potential, the result in Proposition \ref{p:p} follows from the fact that the fundamental solution of \eqref{eq:NLFP} is a Gaussian, see  \citep{CarrilloToscani1998}.

\begin{corollary}
\label{p:p2}
Let $\rho_0$ be a non-Gaussian initial condition for \eqref{eq:NLFP} in the case where $\Phi_R$ is given by \eqref{eq:phi-reg}.
Assume that $\rho_0$ satisfies $\KL(\rho_0\|\rho_\infty)<\infty$. Then any solution of \eqref{eq:NLFP} converges exponentially fast to $\rho_{\infty}$
given by \eqref{eq:Gibbs} as $t \to \infty$ both in entropy, and in $L^1(\R^d)$.
\end{corollary}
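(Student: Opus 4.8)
The plan is to deduce the statement from Proposition~\ref{prop:entropydecay}: in the linear case the regularized misfit \eqref{eq:phi-reg} is quadratic and globally strongly convex, so the only thing that is not immediate is the uniform-in-time lower bound on the covariance matrix $\CC(\rho(t))$ required by that proposition. The whole point is that, for linear $\G$, this bound is \emph{not} an extra assumption but a consequence of the closed moment dynamics established in Lemma~\ref{eq:roil}.

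First I would record that for $\Phi_R$ given by \eqref{eq:phi-reg} one has $D^2\Phi_R \equiv B^{-1}$, and since $\Gamma_0$, hence $B$, is strictly positive-definite (as noted before Lemma~\ref{eq:roil}), $D^2\Phi_R \ge \lambda I_d$ with $\lambda:=\lambda_{\min}(B^{-1})>0$. This is the convexity hypothesis of Proposition~\ref{prop:entropydecay}. Next I would observe that the assumption $\KL(\rho_0\|\rho_\infty)<\infty$ forces $\rho_0\ll\rho_\infty$, hence $\rho_0$ is absolutely continuous with respect to Lebesgue measure, and moreover $\int \rho_0(u)\,\Phi_R(u)\,\rd u<\infty$ — a standard consequence of finite relative entropy with respect to a Gaussian, via the Fenchel--Young inequality for $x\log x$ — so that $\rho_0$ has finite second moment. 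In particular $m(\rho_0)$ and $\mC(0):=\CC(\rho_0)$ are well-defined, and $\mC(0)$ is strictly positive-definite: an absolutely continuous density cannot be supported on a proper affine subspace, so $v^\top\CC(\rho_0)v=\int (v\cdot(u-m(\rho_0)))^2\rho_0\,\rd u>0$ for every $v\neq 0$.

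Second — and this is where linearity is essential — I would invoke Lemma~\ref{eq:roil}, which applies to \emph{any} solution $\rho(t)$ of \eqref{eq:NLFP} with quadratic $\Phi_R$, not only Gaussian ones: the covariance $\CC(\rho(t))$ equals $\mC(t)$ solving \eqref{eq:Cdot}, whose inverse is given explicitly by \eqref{eq:Cinversesol}, i.e. $\mC(t)^{-1}=\mC(0)^{-1}e^{-2t}+B^{-1}(1-e^{-2t})$. Reading this as a convex combination in the Loewner order and using $\mC(0)^{-1},B^{-1}\succeq 0$ gives $\mC(t)^{-1}\preceq \mC(0)^{-1}+B^{-1}$ for all $t\ge 0$, hence $\CC(\rho(t))=\mC(t)\succeq \alpha I_d$ with $\alpha:=\bigl(\lambda_{\max}(\mC(0)^{-1})+\lambda_{\max}(B^{-1})\bigr)^{-1}>0$, a single constant valid along the whole trajectory. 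This is precisely the remaining hypothesis of Proposition~\ref{prop:entropydecay}.

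With both hypotheses verified, Proposition~\ref{prop:entropydecay} (and the entropy estimate in its proof) yields $\KL(\rho(t)\|\rho_\infty)\le \KL(\rho_0\|\rho_\infty)e^{-2\alpha\lambda t}$ and $\|\rho(t)-\rho_\infty\|_{L^1(\R^d)}\le c\,e^{-\alpha\lambda t}$; since in the linear case $\rho_\infty$ from \eqref{eq:Gibbs} is exactly the Gaussian \eqref{eq:infty}, this gives the corollary. I expect the main obstacle to be not the dynamical estimate — which is entirely contained in Lemma~\ref{eq:roil} — but the non-degeneracy bookkeeping of the second and third paragraphs: confirming from $\KL(\rho_0\|\rho_\infty)<\infty$ that $\mC(0)$ is finite and invertible, that the second moment (hence $m(\rho(t)),\CC(\rho(t))$) stays finite for all $t$ so that Lemma~\ref{eq:roil} genuinely governs the moments, and the implicit appeal to a global-in-time well-posedness theory for \eqref{eq:NLFP}, which is assumed here and deferred to future work.
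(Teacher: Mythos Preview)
Your proposal is correct and follows essentially the same route as the paper: verify $D^2\Phi_R=B^{-1}\succeq\lambda I_d$, use Lemma~\ref{eq:roil} to get a uniform-in-time lower bound on $\CC(\rho(t))$, then invoke Proposition~\ref{prop:entropydecay}. The only cosmetic difference is that you read the covariance bound off the explicit solution \eqref{eq:Cinversesol} as a Loewner-convex combination, whereas the paper instead derives the differential inequality $\dot q\le 2\lambda_{\max}(B^{-1})-2q$ for $q(t)=\langle a,\mC(t)^{-1}a\rangle$ directly from \eqref{eq:display}; your added bookkeeping (that $\KL(\rho_0\|\rho_\infty)<\infty$ forces finite second moment and $\mC(0)\succ0$) is a welcome clarification the paper leaves implicit.
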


\begin{proof}
Let $a\in\R^d$ have Euclidean norm $1$ and define $q(t):=\langle a, \mC(t)^{-1} a \rangle$. From equation \eqref{eq:display} it follows that
$$\dot{q} \le 2\lambda-2q$$
where $\lambda$ is the maximum eigenvalue of $B^{-1}$.
Hence it follows that $q$ is bounded above, independently of $a$, and that hence $\mC$ is bounded from below as an operator. Together with the fact that the Hessian $D^2\Phi_R=B^{-1}$ is bounded from below, we conclude using Proposition~\ref{prop:entropydecay}.
\end{proof}

\subsection{Kalman-Wasserstein Gradient Flow}
\label{ssec:KWGF}

We introduce an infinite-dimensional Riemannian metric structure, which we name the Kalman-Wasserstein metric, in density space.
It allows the interpretation of solutions to equation~\eqref{eq:NLFP} as gradient flows in density space. 
To this end we denote by $\mathcal{P}$ the space of probability measures on a convex set $\Omega\subseteq \R^d$:
\begin{equation*}
    \mathcal{P}:=\left\{\rho\in L^1(\Omega)\,:\, \rho\ge 0 \text{ a.e. }\,,\, \int \,\rho(x)\,\rd x =1\right\}\,.
\end{equation*}
The probability simplex $\mathcal{P}$ is a manifold with boundary. For simplicity, we focus on the subset
$$
\mP_+:= \left\{\rho\in \mathcal{P}\, :\, \rho>0\text{ a.e. }\,,\, \rho\in C^\infty(\Omega) \right\}\,.
$$
The tangent space of $\mP_+$ at a point $\rho\in\mP_+$ is given by
\begin{align*}
    \Tp&=\left\{ \left.\frac{\rd}{\rd t}\rho(t)\right|_{t=0}\,:\, \rho(t) \text{ is a curve in } \mP_+\,,\, \rho(0)=\rho \right\}\\
    &= \left\{ \sigma\in C^{\infty}(\Omega)\colon \int \sigma dx=0 \right\}\,.
\end{align*}
The second equality follows since for all $\sigma\in \Tp$ we have $\int \sigma(x)\, \rd x=0$ as the mass along all curves in $\mathcal{P}_+$ remains constant. For the set $\mP_+$, the tangent space $\Tp$ is therefore independent of the point $\rho\in\mP_+$.
 Cotangent vectors are elements of the topological dual $\cTp$ and can be identified with tangent vectors via the action of the \emph{Onsager operator} \citep{Mielke16Ons,Ons31p1,Ons31p2,OnM53,Ott05}
$$\Ons:\cTp\to\Tp\,.$$

In this paper, we introduce the following new choice of Onsager operator:
\begin{equation}\label{def-Ons}
    \Ons(\phi)=-\nabla\cdot\left(\rho\CC(\rho)\nabla \phi\right)=: \left(-\Onsop\right)\phi\,.
\end{equation}
By Lemma~\ref{lem:zerocov}, the weighted elliptic operator $\Onsop$ becomes degenerate if $\rho$ is a Dirac. For points $\rho$ in the set $\mathcal{P}_+$ that are bounded away from zero, the operator $\Onsop$ is well-defined, non-singular and invertible since $\rho\CC(\rho)>0$. So we can write
\begin{align*}
    \Ons^{-1}\,:\,&\Tp\to\cTp,\quad\sigma\mapsto \left(-\Onsop\right)^{-1} \sigma\,.
\end{align*}
This provides a 1-to-1 correspondence between elements $\phi\in\cTp$ and $\sigma\in\Tp$. For general $\rho\in\mP_+$, we can instead use the pseudo-inverse $\left(-\Onsop\right)^{\dagger}$, see \citep{LiG}.
With the above choice of Onsager operator, we can define a generalized Wasserstein metric tensor:
\begin{definition}[Kalman-Wasserstein metric tensor]
Define
$$g_{\rho,\CC}\,:\, \Tp\times \Tp \to\R$$ as follows:
\begin{equation*}
    g_{\rho,\CC}(\sigma_1,\sigma_2)=\int_\Omega \left\langle\nabla \phi_1\,,\,\CC(\rho)\nabla \phi_2\right\rangle \,\rho\, \rd x,
\end{equation*}
where $\sigma_i=\left(-\Onsop\right)\phi_i=-\nabla\cdot\left(\rho\CC(\rho)\nabla \phi_i\right)\in T_\rho\mathcal{P}_+$ for $i=1,2$.
\end{definition}
With this metric tensor, the Kalman-Wasserstein metric $\mathcal{W}_{\mathcal{C}}\colon \mathcal{P}_+\times \mathcal{P}_+\rightarrow \mathbb{R}$ can be represented by the geometric action function. Given two densities $\rho^0$, $\rho^1\in\mathcal{P}_+$, consider
\begin{align*}
W_{\mathcal{C}}(\rho^0, \rho^1)^2=\inf&\int_0^1 \int_{\Omega} \left\langle\nabla \phi_t\,,\,\CC(\rho_t)\nabla \phi_t\right\rangle \,\rho_t\, \rd x\\
\text{ subject to}&\quad\partial_t\rho_t+\nabla\cdot(\rho_t\CC(\rho_t)\nabla\phi_t)=0,~\rho_0=\rho^0,~\rho_1=\rho^1,
\end{align*}
where the infimum is taken among all continuous density paths $\rho_t:=\rho(t,x)$ and potential functions $\phi_t:=\phi(t,x)$.
The Kalman-Wasserstein metric has several interesting mathematical properties, which will be the focus of future work. In this paper, working in $(\mathcal{P}_+, g_{\rho,\mathcal{C}})$, we derive the gradient flow formulation that underpins the formal calculations given in Subsection~\ref{ssec:NP} for the energy functional $E$ defined in \eqref{eq:energy}.
\begin{proposition}\label{prop2}
Given a finite functional $\mF:\mathcal{P}_+\to\R$, the gradient flow of $\mathcal{F}(\rho)$ in $(\mathcal{P}_+, g_{\rho,\mathcal{C}})$ satisfies
\begin{equation*}
 \partial_t\rho=\nabla\cdot\left(\rho\,\CC(\rho)\nabla \frac{\delta \mF}{\delta \rho}\right)\,.
\end{equation*}
\end{proposition}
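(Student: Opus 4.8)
The plan is to unravel the abstract definition of a Riemannian gradient flow in the metric $g_{\rho,\CC}$ and match it against the flat $L^2$ structure in which the first variation $\delta\mF/\delta\rho$ naturally lives. Throughout I would work at the same formal level as the preceding subsections, treating $\Onsop$ as an invertible operator on the subset of $\mP_+$ on which $\rho$ is bounded away from zero (so that $\rho\,\CC(\rho)>0$ and the weighted elliptic operator is non-degenerate), and using the pseudo-inverse otherwise as indicated in the text.

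First I would record the ``cometric'' form of $g_{\rho,\CC}$. Given $\sigma_i=(-\Onsop)\phi_i\in\Tp$, an integration by parts — with no boundary contribution, using either a no-flux condition on the convex set $\Omega$ or sufficient decay of $\rho\,\CC(\rho)\nabla\phi_i$ — gives
\begin{equation*}
g_{\rho,\CC}(\sigma_1,\sigma_2)=\int_\Omega \langle\nabla\phi_1,\CC(\rho)\nabla\phi_2\rangle\,\rho\,\rd x=\int_\Omega \phi_1\bigl(-\nabla\cdot(\rho\,\CC(\rho)\nabla\phi_2)\bigr)\,\rd x=\bigl\langle(-\Onsop)^{-1}\sigma_1,\sigma_2\bigr\rangle_{L^2(\Omega)}\,.
\end{equation*}
Next, by definition the gradient flow of $\mF$ is the curve $\rho(t)$ with $\partial_t\rho=-\,\mathrm{grad}_{g_{\rho,\CC}}\mF(\rho)$, where $\mathrm{grad}_{g_{\rho,\CC}}\mF$ is the unique element of $\Tp$ satisfying $g_{\rho,\CC}(\mathrm{grad}\,\mF,\sigma)=\langle\delta\mF/\delta\rho,\sigma\rangle_{L^2}$ for all $\sigma\in\Tp$. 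Substituting the cometric form, the gradient flow condition reads $\langle(-\Onsop)^{-1}\partial_t\rho,\sigma\rangle_{L^2}=-\langle\delta\mF/\delta\rho,\sigma\rangle_{L^2}$ for every $\sigma\in\Tp$. Since $\Tp$ is exactly the space of smooth mean-zero functions, this forces $(-\Onsop)^{-1}\partial_t\rho=-\,\delta\mF/\delta\rho$ up to an additive constant (constants being $L^2$-orthogonal to $\Tp$), and applying $-\Onsop$ — which annihilates constants because it only sees $\nabla\phi$ — yields $\partial_t\rho=-(-\Onsop)(\delta\mF/\delta\rho)=\nabla\cdot\bigl(\rho\,\CC(\rho)\nabla\tfrac{\delta\mF}{\delta\rho}\bigr)$, which is the claimed equation.

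An entirely equivalent route, which I might prefer for transparency, avoids the cometric: write $\partial_t\rho=(-\Onsop)\psi$ and $\sigma=(-\Onsop)\phi$, plug directly into $g_{\rho,\CC}(\partial_t\rho,\sigma)=-\langle\delta\mF/\delta\rho,\sigma\rangle_{L^2}$, integrate by parts on the right-hand side, and obtain $\int_\Omega\langle\nabla\psi,\CC(\rho)\nabla\phi\rangle\,\rho\,\rd x=-\int_\Omega\langle\nabla\tfrac{\delta\mF}{\delta\rho},\CC(\rho)\nabla\phi\rangle\,\rho\,\rd x$ for all test potentials $\phi$; positive-definiteness of $\CC(\rho)$ together with the invertibility of $\Onsop$ then gives $\nabla\psi=-\nabla(\delta\mF/\delta\rho)$, hence the same conclusion.

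The main obstacle is not any single calculation but the functional-analytic bookkeeping: one must ensure that the correspondence $\phi\mapsto\sigma=(-\Onsop)\phi$ is a genuine bijection between $\cTp$ and $\Tp$ (so that ranging over all $\phi$ is the same as ranging over all $\sigma$), justify the integration by parts with no boundary terms, and observe that the non-uniqueness of $\delta\mF/\delta\rho$ modulo constants is precisely absorbed by the kernel of $\Onsop$, so that the resulting evolution equation is well-defined. These are exactly the points where the degeneracy at Diracs (Lemma~\ref{lem:zerocov}) matters, and where one would invoke the pseudo-inverse $(-\Onsop)^{\dagger}$ and the reference \citep{LiG} rather than attempt a fully rigorous treatment here.
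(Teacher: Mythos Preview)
Your proposal is correct and follows essentially the same approach as the paper. Both arguments use the defining relation $g_{\rho,\CC}(\mathrm{grad}\,\mF,\sigma)=\langle\delta\mF/\delta\rho,\sigma\rangle_{L^2}$, pass between tangent vectors and potentials via the Onsager operator $(-\Onsop)$, and integrate by parts to identify $\mathrm{grad}\,\mF=-\nabla\cdot(\rho\,\CC(\rho)\nabla\tfrac{\delta\mF}{\delta\rho})$; your ``alternative route'' is in fact almost verbatim the paper's computation, while your cometric formulation and your explicit handling of the additive-constant ambiguity in $\delta\mF/\delta\rho$ are minor expository refinements rather than a different strategy.
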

\begin{proof}
The Riemannian gradient operator $\textrm{grad}\mathcal{F}(\rho)$ is defined via the metric tensor $g_{\rho, \mathcal{C}}$ as follows:
		\begin{equation*}
		g_{\rho, \mathcal{C}}(\sigma, \textrm{grad}\mathcal{F}(\rho))= \int_\Omega \frac{\delta}{\delta\rho(u)}\mathcal{F}(\rho) \sigma(u) \rd u\,,\qquad \forall \sigma\in T_\rho\mathcal{P}_+\,.
		\end{equation*}
		Thus, for $\phi:=(-\Delta_{\rho,\CC})^{-1}\sigma\in T_\rho^*\mathcal{P}_+$, we have
		\begin{equation*}
		\begin{split}
		  g_{\rho, \mathcal{C}}(\sigma, \textrm{grad}\mathcal{F}(\rho))
		  =& \int \phi(u) \textrm{grad}\mathcal{F}(\rho)\,\rd u
		  =-\int\nabla\cdot(\rho \mathcal{C}(\rho)\nabla\phi)\frac{\delta}{\delta\rho}\mathcal{F}(\rho)\,\rd u\\
		  =& \int \left\langle\nabla\phi\,,\, \CC(\rho)\nabla\frac{\delta}{\delta\rho}\mathcal{F}(\rho)\right\rangle\,\rho \,\rd u\\
		  =& -\int \phi(u) \nabla\cdot(\rho\mathcal{C}(\rho)\nabla\frac{\delta}{\delta\rho}\mathcal{F}(\rho))\,\rd u.
		\end{split}
		\end{equation*}
Hence
\begin{equation*}
\textrm{grad}\mathcal{F}(\rho)=-\nabla\cdot(\rho\mathcal{C}(\rho)\nabla\frac{\delta}{\delta\rho}\mathcal{F}(\rho)).
\end{equation*}
Thus we derive the gradient flow by
		\begin{equation*}
		  \partial_t\rho=-\textrm{grad}\mathcal{F}(\rho)=\nabla\cdot(\rho\mathcal{C}(\rho)\nabla\frac{\delta}{\delta\rho}\mathcal{F}(\rho)).
		\end{equation*}
\end{proof}

\begin{remark}
Our derivation concerns the gradient flow on the subset $\mP_+$ of $\mP$ for simplicity of exposition. However, a rigorous analysis of the evolution of the gradient flow \eqref{eq:NLFP3} requires to extend the above arguments to the full set of probabilities $\mP$, especially as we want to study Dirac measures in view of Lemma~\ref{lem:zerocov}.
If $\rho$ is an element of the boundary of $\mP$, one may consider instead the pseudo inverse of the operator $\Delta_{\rho,\mathcal{C}}$. This will be the focus of future work, also see the more general analysis in \citep{AmbrosioGigliSavare2005_gradienta}, e.g. Theorem 11.1.6.
\end{remark}

\section{Numerical Experiments}
\label{sec:NE}

In this section we demonstrate that the intuition developed in the previous two
sections does indeed translate into useful algorithms for generating approximate
posterior samples without computing derivatives of the forward map $\G$. We do
this by considering non-Gaussian inverse problems, defined through a nonlinear
forward operator $\G$, showing how numerical solutions of \eqref{eq:implement}
are distributed after large time, and comparing them with exact posterior
samples found from MCMC.

Achieving the mean-field limit requires $J$ large, and hence typically
larger than the dimension $d$ of the parameter space. There are
interesting and important problems arising in science and engineering in
which the number of parameters to be estimated is small,
even though evaluation of $\G$ involves solution of computationally expensive
PDEs; in this case choosing $J>d$ is not prohibitive. We also include
numerical results which probe outcomes when $J<d.$ To this end we study
two problems, the first an inverse problem for a two-dimensional vector arising
from a two point boundary-value problem, and the second an inverse problem
for permeability from pressure measurements in Darcy flow; in this second
problem the dimension of the parameter space is tunable from small up to
infinite dimension, in principle.

\subsection{Derivative-Free}\label{sec:implement}
In this subsection we describe how to use \eqref{eq:implement}
for the solution of the inverse problem \eqref{eq:IP}.
We approximate the continuous time stochastic
dynamics by means of a linearly implicit split-step
discretization scheme given by
\begin{subequations}\label{eq:implicit}
\begin{align}
    {u}^{(*, j)}_{n+1} &= {u}^{(j)}_{n} - \Delta t_n \, \frac{1}{J}\sum_{k = 1}^J \, \langle \G(u^{(k)}_n) - \bar{\G}, \G(u^{(j)}_n) - y \rangle_\Gamma \, u^{(k)}_n \, - \Delta t_n \, \bC (U_n) \, \Gamma_0^{-1} \, {u}^{(*, j)}_{n+1} \\
    {u}^{(j)}_{n+1} &= {u}^{(*, j)}_{n+1} +\,\sqrt{2 \, \Delta t_n\, \bC(U_n)} \,  \xi^{(j)}_n,
\end{align}
\end{subequations}
where $\xi^{(j)}_n \sim \N(0, I)$, $\Gamma_0$  is the prior covariance and $\Delta t_n$ is an adaptive timestep computed as in
\citep{KovachkiStuart2018_ensemble}.

\subsection{Gold Standard: MCMC}
In this subsection we describe the specific Random Walk Metropolis Hastings
(RWMH) algorithm used to solve the same Bayesian inverse problem as in the
previous subsection; we view the results as gold standard samples from the
desired posterior distribution.  The link between RWMH methods and Langevin
sampling is explained in the literature review within the introduction where
it is shown that the latter arises as a diffusion limit of the former,
as shown in numerous papers following on from the seminal work
in  \citep{roberts1997}.
The proposal distribution is a
Gaussian centered at the current state of the Markov chain with covariance given
by $\Sigma = \tau \times \bC(U^*)$, where $\bC(U^*)$ is the covariance computed
from the last iteration of the algorithm described in the preceding subsection,
and $\tau$ is a scaling factor tuned for an acceptance rate of approximately
 $25\%$ \citep{roberts1997}. In our case, $\tau = 4$.
The RWMH algorithm was used to get $N = 10^5$
samples with the Markov chain starting at an approximate solution given by the
mean of the last step of the algorithm from the previous subsection. For
the high dimensional problem we use the pCN variant on RWMH \citep{cotter2013mcmc}; this too has a diffusion limit of Langevin form \citep{pillai2014noisy}.

\subsection{Numerical Results: Low Dimensional Parameter Space}

The numerical experiment considered here is the example originally presented
in \citep{ernst2015analysis} and also used in \citep{herty2018kinetic}. We start by defining the forward map which is given by the one-dimensional elliptic boundary value problem
\begin{align}
    -\frac{\rd}{\rd x}\left(\exp(u_1) \, \frac{\rd}{\rd x} p(x) \right) = 1, \quad x \in [0,1],
\end{align}
with boundary conditions $p(0) = 0$ and $p(1) = u_2$.  The explicit solution for this problem, \cite[see][]{herty2018kinetic}, is given by
\begin{align}
    p(x) = u_2 x + \exp(-u_1) \left( -\frac{x^2}{2} + \frac{x}{2}\right).
\end{align}
The forward model operator $\G$ is then defined by
\begin{align}\label{eq:ebvp}
    \G(u) = \left( \begin{array}{c}
         p(x_1)\\
         p(x_2)
    \end{array}\right).
\end{align}
Here $u = (u_1, u_2)^\top$ is a constant vector that we want to find and we
assume that we are given noisy measurements $y$ of $p(\cdot)$ at locations $x_1
= 0.25$ and $x_2 = 0.75$. The precise Bayesian inverse problem considered here
is to find the distribution of the unknown $u$ conditioned on the observed data
$y$, assuming additive Gaussian noise $\eta \sim \N(0, \Gamma)$, where $\Gamma =
0.1^2 \, I_2$ and $I_2 \in \R^{2 \times 2}$ is the identity matrix. We use as
prior distribution $\N(0, \Gamma_0)$, $\Gamma_0 = \sigma^2 I_2$ with
$\sigma=10.$ The resulting Bayesian inverse problem is then solved,
approximately, by the algorithms we now outline and with with observed data $y =
(27.5, 79.7)^\top$.
Following \citep{herty2018kinetic}, we consider an initial ensemble drawn
from $\N(0, 1) \times \mathsf{U}(90, 110)$.

Figure~\ref{fig:results} shows the results for the solution of the Bayesian
inverse problem considered above. In addition to implementing the algorithms
described in the previous two subsections, we also employ a specific
implementation of the EKI formulation introduced in the paper of
\cite{herty2018kinetic}, and defined by the numerical discretization shown in
\eqref{eq:implicit}, but with $\bC(U)$ replaced by the identity matrix $I_2$;
this corresponds to the algorithm from equation (20) of
\cite{herty2018kinetic}, and in particular the last display of their Section 5,
with $\xi \sim \N(0,I_2).$ The blue dots correspond to the output of this
algorithm at the last iteration. The red dots correspond to the last ensemble of
the EKI algorithm as presented in \citep{KovachkiStuart2018_ensemble}. The
orange dots depict the RWMH gold standard described above. Finally, the green
dots shows the ensemble members at the last iteration of the proposed EKS
\eqref{eq:implement}. In this experiment, all versions of the ensemble Kalman
methods were run with the adaptive timestep scheme from Subsection
\ref{sec:implement} and all were run for $30$ iterations with an ensemble size
of $J = 10^3$.

\begin{figure}[!htp]
    \centering
    \includegraphics[width= .9\linewidth]{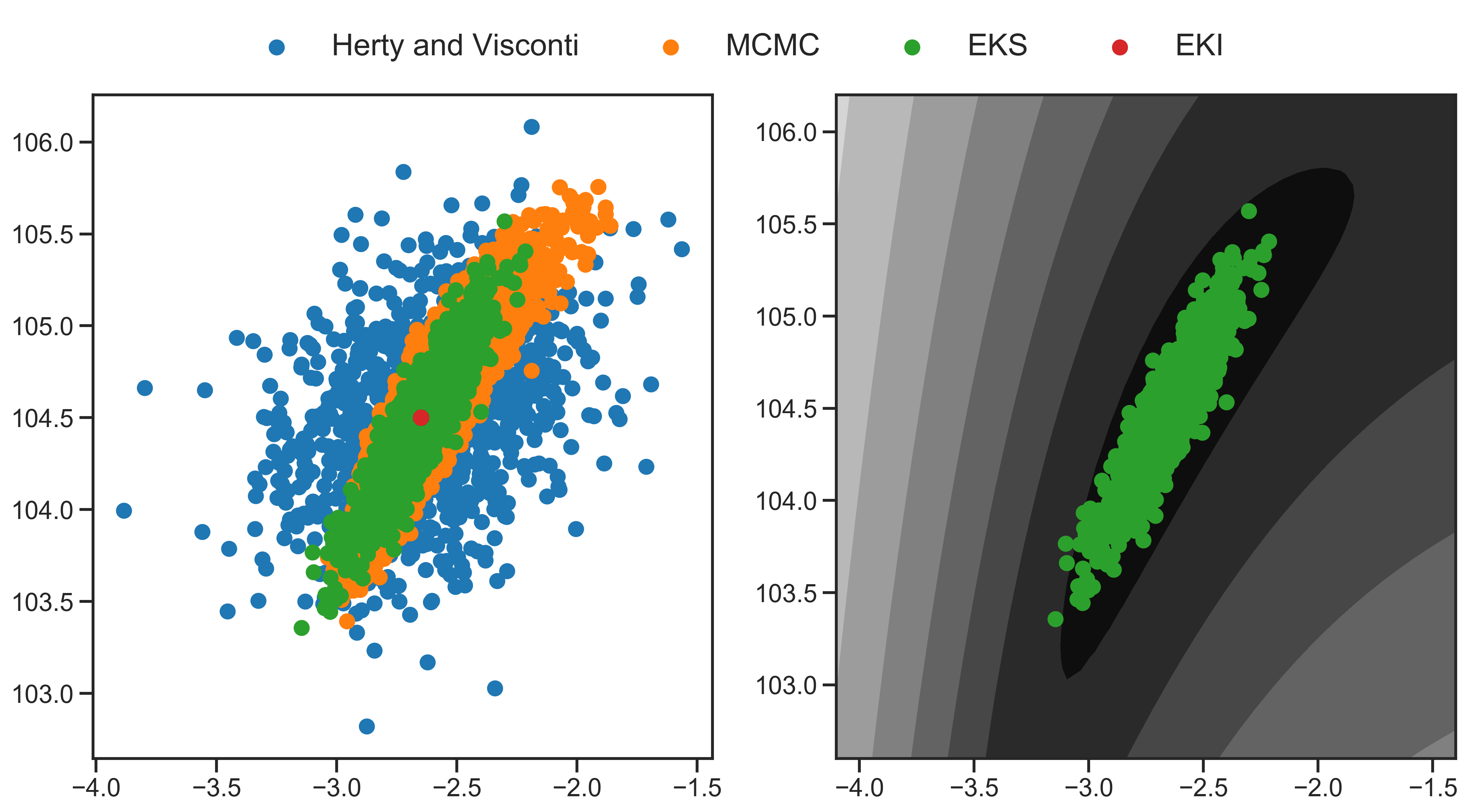}\\
    \vspace{1cm}
    \includegraphics[width= .8\linewidth]{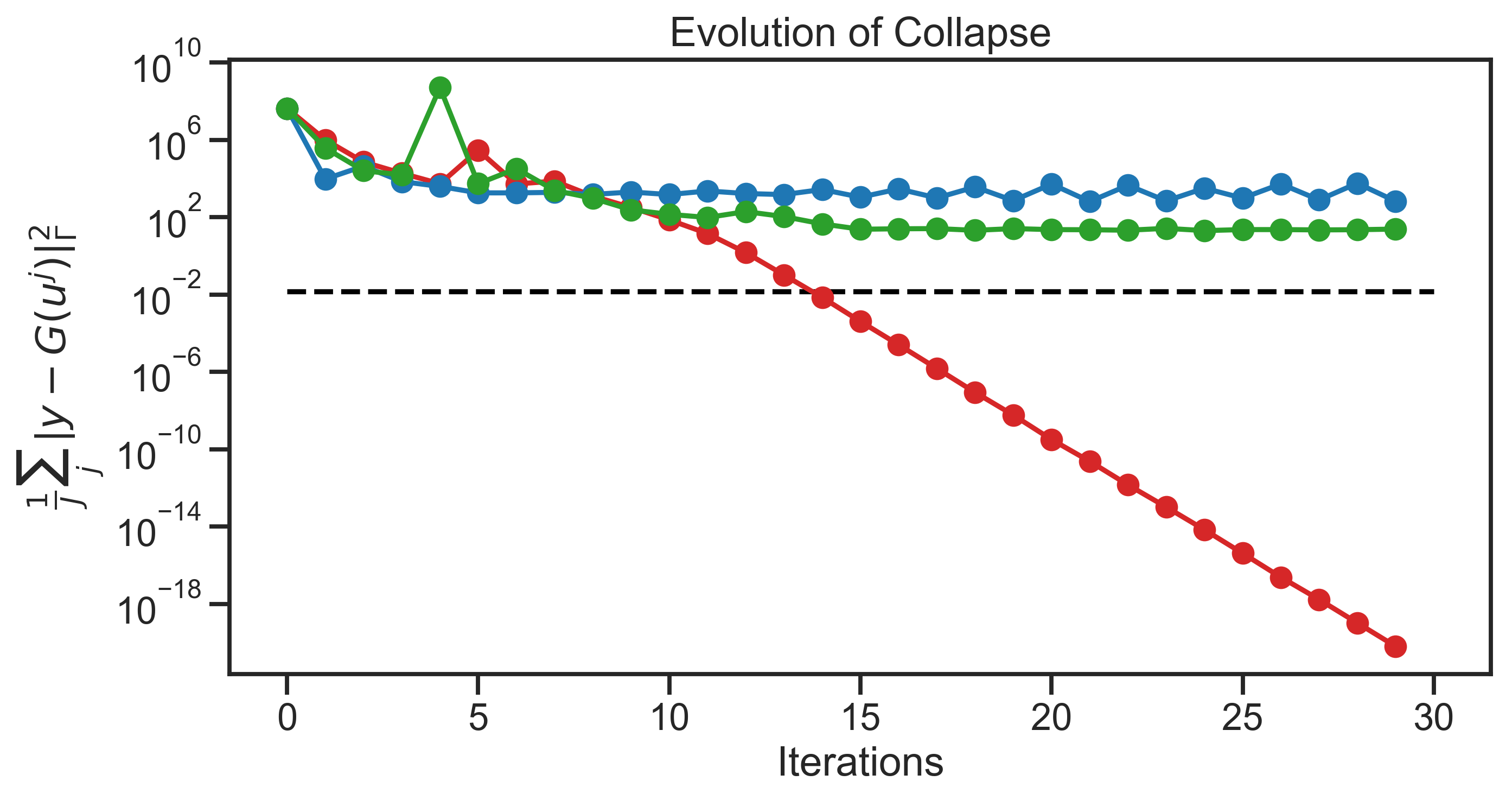}
    \caption{Results of applying different versions of ensemble Kalman methods to the non-linear elliptic boundary problem. For comparison, a Random Walk Metropolis Hastings
    algorithm is also displayed to provide a gold standard. The proposed EKS captures approximatel the true distribution, effectively avoiding overfitting or overdispersion shown with the other two implementations. Overfitting is clearly shown
    from the red line in the lower subfigure. The line in blue, shows overdispersion
    exhibited by the algorithm proposed in \citep{herty2018kinetic}. The upper right subfigure
    illustrates the approximation to the posterior. Color coding is consistent among the
    subfigures. }
    \label{fig:results}
\end{figure}

Consider first the top-left panel. The true distribution, computed by RWMH, is
shown in orange.  Note that the algorithm of \cite{KovachkiStuart2018_ensemble}
collapses to a point (shown in red), unable to escape overfitting, and relating
to a form of consensus formation. In contrast, the algorithm of
\cite{herty2018kinetic}, while avoiding overfitting, overestimates the spread of
the ensemble members, relative to the gold standard RWMH; this is exhibited by
the blue over-dispersed points. The proposed EKS (green points) gives results
close to the RWMH gold standard. These issues are further demonstrated in the
lower panel which shows the misfit (loss) function as a function of iterations
for the three algorithms (excluding RWMH); the red line demonstrates overfitting
as the misfit value falls below the noise level, whereas the other two
algorithms avoid overfitting.

We include the derivative-free optimization algorithm EKI (red points)
because it gives insight into what can be achieved with these ensemble based
methods in the absence of noise (namely derivative-free optimization);
we include the noisy EKI algorithm of \cite{herty2018kinetic} (blue points)
to demonstrate that considerable care is needed with the introduction of
noise if the goal is to produce posterior samples; and we include our
proposed EKS algorithm (green points) to demonstrate
that judicious addition of
noise to the EKI algorithm helps to produce approximate samples from the
true posterior distribution of the Bayesian inverse problem; we include
true posterior samples (orange points) for comparison. We reiterate that
the methods of \cite{chen2012ensemble,emerick2013investigation}
also hold the potential to
produce good approximate samples, though they suffer from the rigidity
of needing to be initialized at the prior and integrated to exactly
time $1$.

\subsection{Numerical Results: High Dimensional Parameter Space}

The forward problem of interest is to find the pressure field $p(\cdot)$ in a
porous medium defined by permeability field $a(\cdot)$; for simplicity we assume
that $a(\cdot)$ is a scalar-field in this paper. Given a scalar field $f$
defining sources and sinks of fluid, and assuming Dirichlet boundary conditions
on the pressure for simplicity, we obtain the following elliptic PDE for the
pressure:
\begin{subequations}
\begin{align}
-\nabla \cdot ( a(x) \nabla p(x)) &= f(x), \quad x \in D.\\
p(x) &=0, \qquad \, \, x \in \partial D.
\end{align}
\end{subequations}
In what follows we will work on the domain $D=[0,1]^2.$ We assume that the
permeability is dependent on unknown parameters $u \in \R^d$,
so that $a(x)=a(x;u).$ The inverse problem of interest is to determine $u$
from $d$ linear functionals (measurements) of $p(x;u)$, subject to additive
noise. Thus
\begin{equation}
\G_{j}(u)=\ell_j\bigl(p(\cdot;u)\bigr)+\eta_j, \quad j=1,\cdots, K.
\end{equation}

We will assume that $a(\cdot) \in L^{\infty}(D;\R)$ so that $p(\cdot) \in H^1_0(D;\R)$ and
thus we take the $\ell_j$ to be linear functionals on the space $H^1_0(D;\R)$.
In practice we will work with pointwise measurements so that $\ell_j(p)=p(x_j)$;
these are not elements of the dual space of $H^1_0(D;\R)$ in dimension $2$; but
mollifications of them are, and in practice mollification with a narrow kernel
does not affect results of the type presented here and so we do not use it
\citep{iglesias2015iterative}. We model $a(x;u)$ as a log-Gaussian field with
precision operator defined as
\begin{align}
  \mathcal{C}^{-1} = ( - \Delta + \tau^2 \mathcal{I})^\alpha,
\end{align}
where the Laplacian $\Delta $ is equipped with Neumann boundary conditions
on the space of spatial-mean zero functions,
and $\tau$ and $\alpha$ are known constants that control the
underlying lengthscales and smoothness of the
underlying random field. In our experiments $\tau = 3$, and $\alpha = 2.$
Such parametrization yields a Karhunen-Lo\`{e}ve (KL) expansion
\begin{equation}
\label{eq:adefine}
\log a(x; u)=\sum_{\ell \in K} u_\ell \, \sqrt{\lambda_\ell}\, \varphi_{\ell}(x)
\end{equation}
where the eigenpairs are of the form
\begin{align}
    \varphi_{\ell}(x)=\cos\bigl(\pi \langle \ell,x \rangle \bigr),  \qquad
    \lambda_\ell = (\pi^2 |\ell|^2 + \tau^2)^{-\alpha},
\end{align}
where $K \equiv \Z^2$ is the set of indices over which the random series
is summed  \nc and the $u_\ell \sim N(0,1)$ i.i.d.
\citep{pavliotis2014stochastic}.
In pratice we will approximate $K$ by $K_d \subset \Z^2$, a
set with finite cardinality $d$, and consider different $d.$
For visualization we will sometimes find it helpful to write
\eqref{eq:adefine} as a sum over a one-dimensional variable
rather than a lattice:
\begin{equation}
\label{eq:adefine2}
\log a(x; u)=\sum_{k \in \mathbb{Z}^+} u_k' \, \sqrt{\lambda_k'}\, \varphi_{k}'(x)
\end{equation}
We order the indices in $\mathbb{Z}^+$ so that the
eigenvalues $\lambda_k'$ are in descending order by size.

We generate a truth random field by constructing $u^\dagger
\in \R^d$ by sampling it from $\N(0, I_d)$, with $d = 2^8$ and $I_d$
the identity on $\R^d$ and using $u^\dagger$ as the coefficients in
\eqref{eq:adefine}. We create data $y$ from \eqref{eq:IP} with
$\eta \sim N(0,0.1^2 \times I_K).$ For the Bayesian inversion we choose
prior covariance $\Gamma_0=10^2 I_d;$
we also sample from this prior to initialize the ensemble for EKS.
We run the experiments with different
ensemble sizes to understand both strengths and limitations of  the proposed
algorithm for nonlinear forward models. Finally, we chose $J \in  \{8, 32, 128, 512, 2048\},$
which allows the study of both $J>d$ and $J<d$ within the methodology.

Results showing the solution of this Bayesian inverse problem by MCMC (orange
dots), with $10^5$ samples, and by the EKS with different $J$ (different shades
of green dots) are shown in \cref{fig:darcy_contrast} and
\cref{fig:darcy_identifiability}. For every ensemble size configuration, the EKS
algorithm was run until $2$ units of time were achieved. As can be seen from
\cref{sfig:darcy_negsob} the algorithm has reached an equilibrium after this
duration. The two dimensional
scatter plots in figure \ref{sfig:dracy_scatter} show components $u_{k}'$
with $k = 0, 1, 2$. That is, we are showing the components of $u$ which are
associated to the three largest eigenvalues in the KL expansion
\eqref{eq:adefine} under the posterior distribution. We can see that sample
spread is better matched to the gold standard MCMC spread as the size $J$ of the
EKS ensemble is increased. In \cref{sfig:darcy_negsob} and \cref{sfig:darcy_L2}
we show the evolution of the dispersion of the ensemble around its mean at every
time step, $\bar u(t)$, and around the truth $u^\dagger$. The metrics we use to test the ensemble spread
are
\begin{align}
  d_{H^{-2}}(\cdot) = \sqrt{\frac{1}{J} \sum_{j= 1}^J \|u^{(j)}(t) - \cdot \,  \|_{H^{-2}}^2}, \qquad
  d_{L^{2}}(\cdot) = \sqrt{\frac{1}{J} \sum_{j= 1}^J \|u^{(j)}(t) - \cdot \,  \|_{L^{2}}^2},
\end{align}
where both are evaluated at $\bar u(t)$ and $u^\dagger$ at every simulated time $t$.
For these metrics we use the norms defined by
\begin{align}
  \| u \|_{H^{-2}} = \sqrt{\sum_{\ell \in K_d} |u_\ell|^2 \lambda_\ell}, \qquad
  \| u \|_{L^2} = \sqrt{\sum_{\ell \in K_d} |u_\ell|^2},
\end{align}
where the first is defined in the negative Sobolev space $H^{-2}$, whilst the
second in the $L^2$ space. The first norm allows for higher discrepancy in the
estimation of the tail of the modes in equation \eqref{eq:adefine}. Whereas, the
second norm penalizes equally discrepancies in the tail of the KL expansion. In
\cref{sfig:darcy_negsob}, we see rapid convergence of the spread around the mean
and around the truth for all ensmeble sizes $J.$ The evolution in
\cref{sfig:darcy_negsob} for both cases shows that the algorithm reaches its
stationary distribution, while incorporating higher variability with increasing
ensemble size. The figures are similar because the posterior mean and the truth
are close to one another. Lower values of the metrics in
\cref{sfig:darcy_negsob} and \cref{sfig:darcy_L2} for smaller ensembles can be
understood due to a mixed effect of reduced variability and overfitting to the
MAP estimate of the Bayesian inverse problem. The results using the $L^2$ norm
in \cref{sfig:darcy_L2}, allows us to see more discrepancy between ensemble
sizes. Higher metric value for larger ensembles is due to the ensemble better
approximating the posterior, as will be discussed below. In summary,
\cref{fig:darcy_contrast} shows evidence that the EKS is generating samples from
a good approximation to the posterior and that this posterior is centred close
to the truth. Increasing the ensemble size improves these features of the EKS
method.
\begin{figure}
  \centering
  \subfigure[Bivariate scatter plots of the approximate posterior
distribution on the three largest modes (as ordered
by prior variance and here labelled $u_0, u_1, u_2$)
in the KL expansion \eqref{eq:adefine}. The pCN algorithm (orange dots) is used as a reference with $10^5$ samples.]{\includegraphics[width=\linewidth]{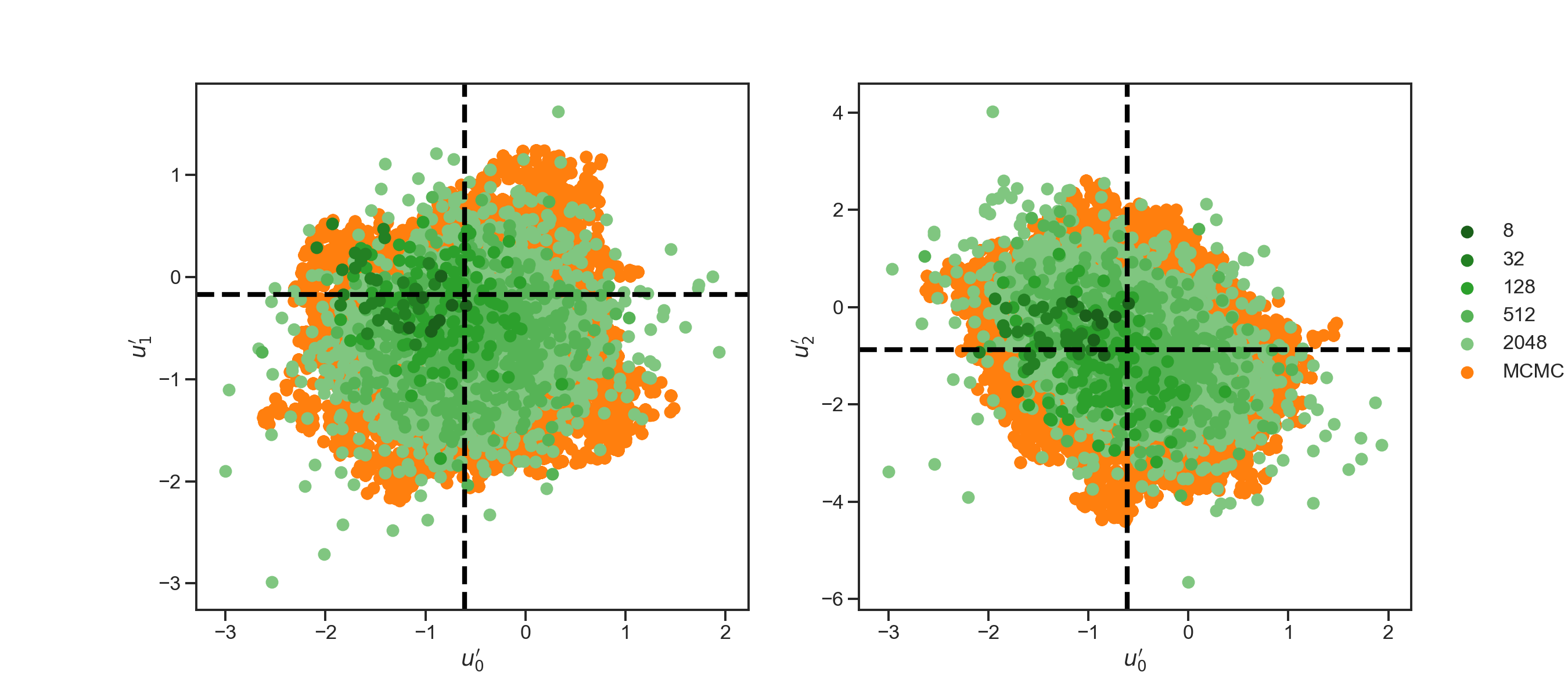} \label{sfig:dracy_scatter}}\\
  \subfigure[Evolution statistics of the EKS with respect to simulated time under the negative Sobolev norm $\|\cdot \|_{H^{-2}}$.]{\includegraphics[width=\linewidth]{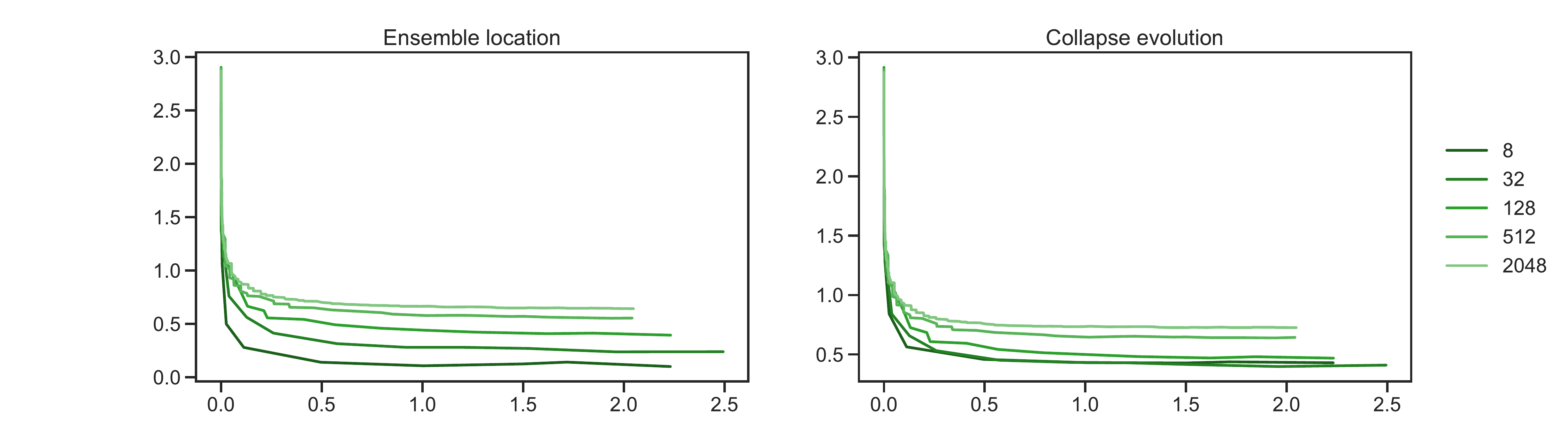}
  \label{sfig:darcy_negsob}}
  \subfigure[Evolution statistics of the EKS with respect to simulated time under norm $\|\cdot \|_{L^2}$.]{\includegraphics[width=\linewidth]{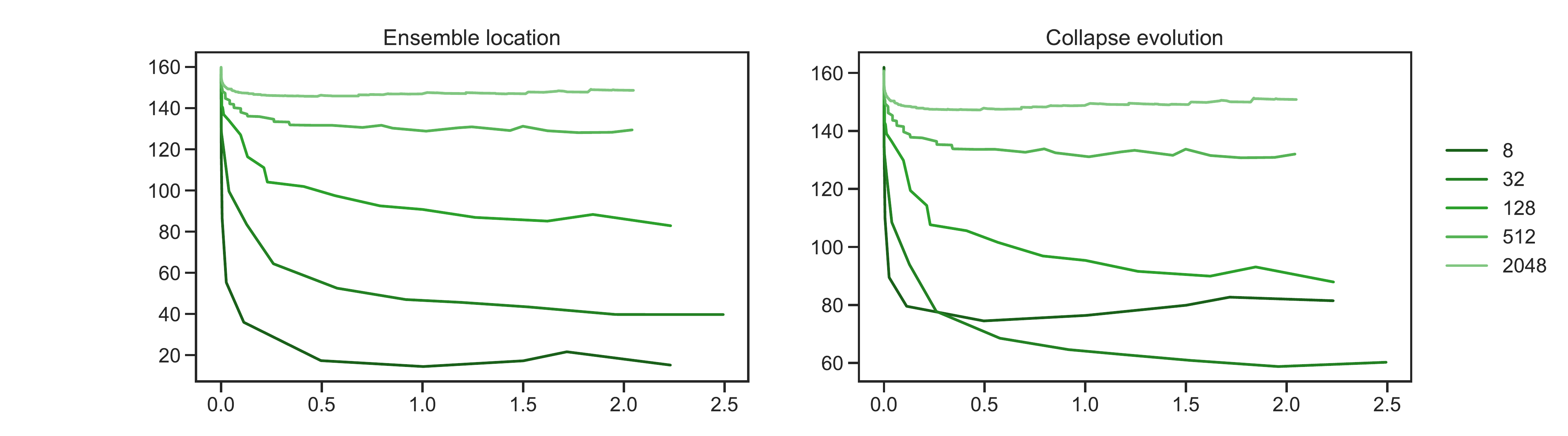}
  \label{sfig:darcy_L2}}
  \caption{Results for the Darcy flow inverse problem in high dimensions. The top
  panel shows scatter plots for different combinations of the higher modes
  in the KL expansion, \cref{eq:adefine}. The green dots correspond to the last
  iteration of the EKS at every ensemble size setting as labeled in the legend.
  Tracking the negative Sobolev norm of the ensembles with respect to its mean
  $\bar u(t)$ and underlying truth $u^\dagger$, shows good match to both the
  solution of the inverse problem and the stationary distribution of
  the Fokker-Planck equation, \cref{eq:Gibbs}. }
  \label{fig:darcy_contrast}
\end{figure}

Figure \ref{fig:darcy_identifiability} demonstrates
how different ensemble sizes are able to capture the marginal posterior
variances of each component in the unkown $u$. The top panel in Figure
\ref{fig:darcy_identifiability} tracks the posterior variance reduction
statistic for every component of $u' \in \R^d,$ which as mentioned before, now is viewed as a vector
of $d$ components rather than a function on subset $K_d$ of the
two-dimensional lattice. The posterior variance reduction
is a measure of the relative decrease of variance for a given quantity of
interest under the posterior with respect to the prior. It is defined as
\begin{align}
  \zeta_k = 1 - \frac{\mathbb{V}(u_k'|y)}{\mathbb{V}(u_k')},
\end{align}
where $\mathbb{V}(\cdot)$ denotes the variance of a random variable. The summary
statistic $\sum_k \zeta_k$ has been used in \citep{spiegelhalter2002bayesian} to
estimate the effective number of parameters in Bayesian models. When this
parameter is close to $1$ then the algorithm has reduced uncertainty
considerably, relative to the prior; when it is close to zero it has reduced it
very little, in comparison with the prior. By studying the figure for the MCMC
algorithm (orange) and comparing with EKS for increasing $J$ (green) we see that
for $J$ around $2000$ the match between EKS and MCMC is excellent. We also see
that for smaller-sized ensembles there is a regularizing effect which artificially
reduces the posterior variation for larger $k.$ On the other hand, the lower
panel in Figure~\ref{fig:darcy_identifiability} allows us to identify the
location of ensemble density by plotting the residuals $u_k' - (u_k^\dagger)'$, for
every component $k = 1, \ldots, d$; in particular we plot the algorithmic mean
of this quantity and $95\%$ confidence intervals. It can be seen that the
ensemble is well located as most of the components include the zero horizontal
line, meaning that marginally the distribution of every component includes the
correct value with high probability. Moreover, we can see two effects in this
figure. Firstly, the lower variability in the first components also shows that
there is enough information on the observed data to identify these components.
Secondly, it can be seen that for very low-sized ensembles the least important
components of $u$ incorporate higher error, when comparing the EKS samples in
green with the orange MCMC samples.
\begin{figure}[!t]
  \centering
  \includegraphics[width=\linewidth]{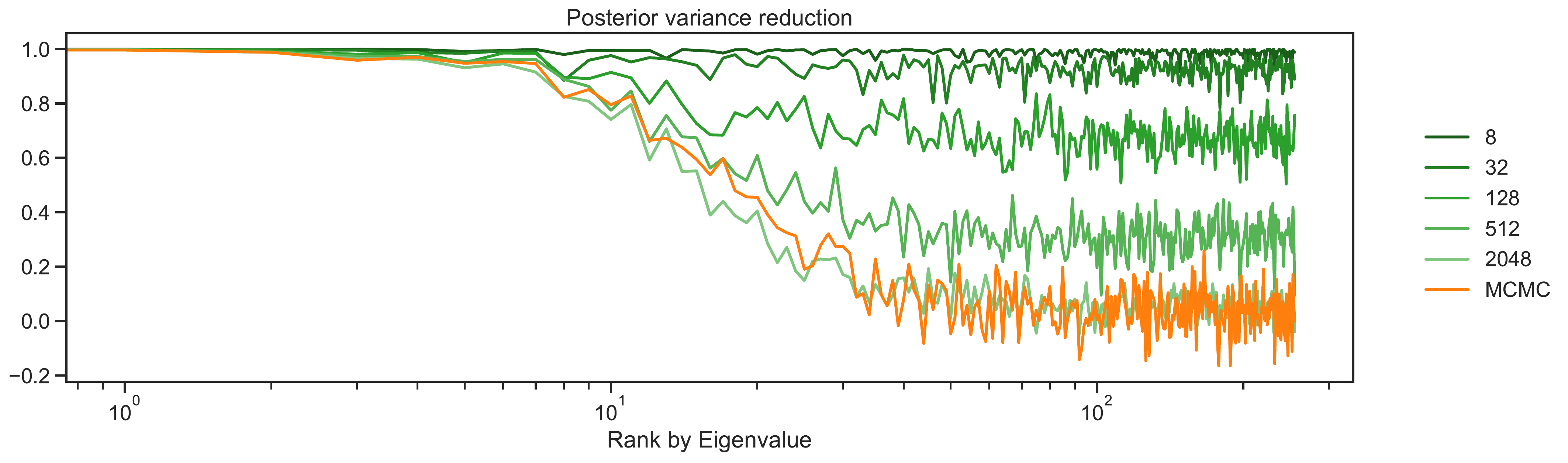}\\
  \medskip
  \includegraphics[width=\linewidth]{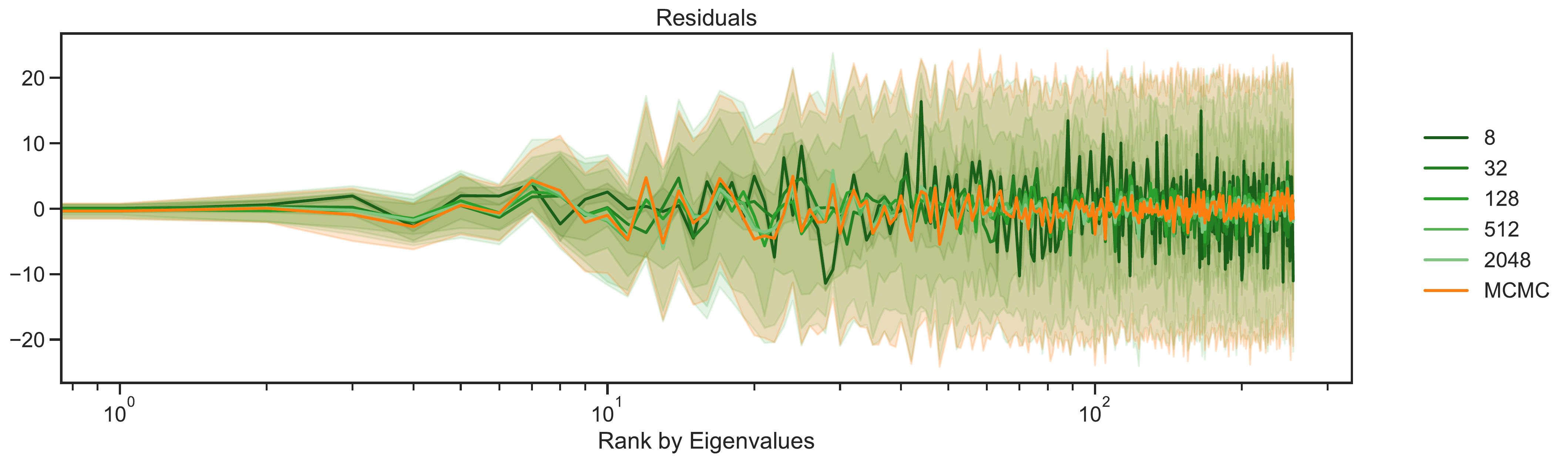}
  \caption{Results showing Darcy flow parameter identifiability. The top panel
  illustrates how bigger ensemble sizes are able to capture better the
  marginal posterior variability of each component. Whereas the lower panel,
  illustrates both variability and consistency of the approximate posterior
  samples from EKS. }
  \label{fig:darcy_identifiability}
\end{figure}

Overall, the mismatch between the results from EKS and the MCMC reference in
both numerical examples can be understood from the fact that the use of the
ensemble equations \eqref{eq:implement} introduces a linear approximation to the
curvature of the regularized misfit. This effect is demonstrated clearly in
Figure~\ref{fig:results}, which shows the samples from EKS against a background
of the level sets of the posterior. However, despite this mismatch, the key
point is that a relatively good set of approximate samples in green is computed
without use of the derivative of the forward model $\G$ in both numerical
examples; it thus holds promise as a method for large-scale nonlinear inverse
problems.

\section{Conclusions}
\label{sec:C}

In this paper we have demonstrated a methodoogy for the addition of  noise to
the basic EKI algorithm so that it generates approximate samples from the
Bayesian posterior distribution -- the ensemble Kalman sampler (EKS). Our
starting point is a set of interacting Langevin diffusions, preconditioned by
their mutual empirical covariance. To understand this system we introduce a new
mean-field Fokker-Planck equation which has the desired posterior distribution
as an invariant measure. We exhibit the new Kalman-Wasserstein metric with
respect to which the Fokker-Planck equation has gradient structure. We also show
how to compute approximate samples from this model by using a particle
approximation based on using ensemble differences in place of gradients, leading
to the EKS algorithm.

In the future we anticipate that methodology to correct for the error introduced
by use of ensemble differences will be a worthwhile development from the
algorithms proposed and we are actively pursuing this \citep{OLSpaper}.
Furthermore, recent interesting work of \cite{reichnew}
studies the invariant measures of the finite particle system
\eqref{eq:eki-basic-iter}, \eqref{eq:ec}. The authors identify a simple linear
correction term of order $J^{-1}$ in \eqref{eq:eki-basic-iter} which renders the
$J-$fold product of the posterior distribution invariant for finite ensemble
number; since one of the major motivations for the use of ensemble methods is
their robustness for small $J$, this correction is important.

We also recognize that other difference-based methods for approximating
gradients may emerge and that developing theory to quantify and control the
errors arising from such difference approximations will be of interest. We
believe that our proposed ensemble-based difference approximation is of
particular value because of the growing community of scientists and engineers
who work directly with ensemble based methods, because of their simplicity and
black-box nature. In the future, we will also study the properties of the
Kalman-Wasserstein metric including its duality, geodesics, and geometric
structure, a line of research that is of independent mathematical interest in
the context of generalized Wasserstein-type spaces. We will investigate the
analytical properties of the new metric within Gaussian families. We expect
these studies will bring insights to design new numerical algorithms for the
approximate solution of inverse problems.

\bigskip

{\footnotesize
\noindent
\textbf{Acknowledgements:}
The authors are grateful to Jos\'e A. Carrillo, Greg Pavliotis and
Sebastian Reich for helpful input which improved this paper. A.G.I. and A.M.S. are supported by the generosity of Eric and Wendy Schmidt by recommendation of the Schmidt Futures program, by Earthrise Alliance, by the Paul G. Allen Family Foundation, and by the National Science Foundation (NSF grant AGS‐1835860). A.M.S. is also supported by NSF under grant DMS 1818977. F.H. was partially supported by Caltech's von Karman postdoctoral instructorship. W.L. was supported by AFOSR MURI FA9550-18-1-0502.
}

\bibliography{main}
\bibliographystyle{imsart-nameyear}

\end{document}